\documentclass{amsart}
\usepackage[utf8]{inputenc}

\usepackage{amssymb,comment}
\usepackage{mathtools}
\usepackage{mathrsfs}

\usepackage{tikz-cd}

\usepackage{csquotes}

\usepackage{xcolor}
\definecolor{darkgreen}{rgb}{0,0.5,0}
\definecolor{darkblue}{rgb}{0,0,0.8}
\definecolor{darkred}{rgb}{0.8,0,0}

\usepackage[pdfencoding=auto,colorlinks,citecolor=darkgreen,linkcolor=darkblue,urlcolor=darkred]{hyperref}

\usepackage[capitalize]{cleveref}
\crefname{definition}{Definition}{Definitions}
\crefname{lemma}{Lemma}{Lemmata}
\crefname{proposition}{Proposition}{Propositions}
\crefname{theorem}{Theorem}{Theorems}
\crefname{corollary}{Corollary}{Corollaries}
\crefname{conjecture}{Conjecture}{Conjectures}
\crefname{algorithm}{Algorithm}{Algorithms}
\crefname{example}{Example}{Examples}
\crefname{remark}{Remark}{Remarks}
\crefname{question}{Question}{Questions}
\crefname{enumi}{Step}{Steps}
\crefname{assumption}{Assumption}{Assumptions}

\usepackage[english]{babel}
\usepackage{url}

\usepackage{enumitem}
\usepackage{colonequals}

\DeclareFontFamily{U}{wncy}{}
\DeclareFontShape{U}{wncy}{m}{n}{<->wncyr10}{}
\DeclareSymbolFont{mcy}{U}{wncy}{m}{n}
\DeclareMathSymbol{\Sha}{\mathord}{mcy}{"58}

\usepackage{microtype}

\theoremstyle{plain}
\newtheorem{theorem}{Theorem}[section]
\newtheorem{lemma}[theorem]{Lemma}

\newtheorem{proposition}[theorem]{Proposition}

\newtheorem{question}[theorem]{Question}
\newtheorem{example}[theorem]{Example}
\newtheorem{remark}[theorem]{Remark}
\newtheorem{notation}[theorem]{Notation}

\theoremstyle{definition}

\theoremstyle{remark}

\def\ol#1{\overline{#1}}
\def\wt#1{\widetilde{#1}}

\def\Alphabet{A,B,C,D,E,F,G,H,I,J,K,L,M,N,O,P,Q,R,S,T,U,V,W,X,Y,Z}
\def\alphabet{a,b,c,d,e,f,g,h,i,j,k,l,m,n,o,p,q,r,s,t,u,v,w,x,y,z}
\def\endpiece{xxx}
\def\makeAlphabet[#1]{\expandafter\makeA#1,xxx,}
\def\makealphabet[#1]{\expandafter\makea#1,xxx,}
\def\makeA#1,{\def\temp{#1}\ifx\temp\endpiece\else%
	\mkbb{#1}\mkfrak{#1}\mkbf{#1}\mkcal{#1}\mkscr{#1}\mkbs{#1}\expandafter\makeA\fi}%
\def\makea#1,{\def\temp{#1}\ifx\temp\endpiece\else\mkfrak{#1}\mkbf{#1}\mkbs{#1}\expandafter\makea\fi}%
\def\mkbb#1{\expandafter\def\csname bb#1\endcsname{\mathbb{#1}}}
\def\mkfrak#1{\expandafter\def\csname fr#1\endcsname{\mathfrak{#1}}}
\def\mkbf#1{\expandafter\def\csname b#1\endcsname{\mathbf{#1}}}
\def\mkcal#1{\expandafter\def\csname c#1\endcsname{\mathcal{#1}}}
\def\mkscr#1{\expandafter\def\csname s#1\endcsname{\mathscr{#1}}}
\def\mkbs#1{\expandafter\def\csname bs#1\endcsname{{\boldsymbol{#1}}}}
\def\makeop[#1]{\xmakeop#1,xxx,}
\def\mkop#1{\expandafter\def\csname #1\endcsname{{\mathrm{#1}}}} %
\def\xmakeop#1,{\def\temp{#1}\ifx\temp\endpiece\else\mkop{#1}\expandafter\xmakeop\fi}%
\def\makeup[#1]{\xmakeup#1,xxx,}
\def\mkup#1{\expandafter\def\csname #1\endcsname{{\mathrm{#1}\,}}} %
\def\xmakeup#1,{\def\temp{#1}\ifx\temp\endpiece\else\mkup{#1}\expandafter\xmakeup\fi}%
\makeAlphabet[\Alphabet]
\makealphabet[\alphabet]
\makeop[Hom,Tor,Ext,holim,hocolim,dim,Sel,GL,SL,H,ord,Sym,Gal,Ann,ind,Aut,End,cd,Frob,Gr,sp,Tot,sm,an,Pic,NS,Br,tors,Reg,new,cts,ur,alg,pd,disc,cyc,rk,cork,ab,nr,tors,Iw,res,inf,Cl,Quot]
\makeup[Spec,Proj,Spwf,Sp,Sh,Spf,Sch,id,dR,rig,cris,im,coker]

\newcommand{\C}{\mathbf{C}}
\newcommand{\F}{\mathbf{F}}

\newcommand{\Q}{\mathbf{Q}}
\newcommand{\Qbar}{\ol\Q}
\newcommand{\Z}{\mathbf{Z}}

\newcommand{\eps}{\varepsilon}
\renewcommand{\epsilon}{\varepsilon}
\renewcommand{\theta}{\vartheta}
\renewcommand{\phi}{\varphi}

\newcommand{\mathup}[1]{\text{\textup{#1}}}
\renewcommand{\H} {\ensuremath{\mathup{H}}}

\newcommand{\Het}{\H_\et}

\newcommand{\defeq}{\colonequals}

\newcommand{\inj}{\hookrightarrow}

\numberwithin{equation}{section}

\newcommand{\et}{\mathrm{\acute{e}t}}
\newcommand{\Tr}{\operatorname{Tr}}

\begin{document}

\title[Point counts of abelian varieties and their zeta function]{Point counts of abelian varieties\\ over finite fields\\ determining their zeta function}

\author{Shiva Chidambaram}
\address{Department of Mathematics, University of Wisconsin-Madison}
\email{chidambaram3@wisc.edu}

\author{Timo Keller}
\address{Leibniz Universität Hannover, Institut für Algebra, Zahlentheorie und Diskrete Mathematik, Welfengarten 1, 30167 Hannover, Germany}
\email{math@kellertimo.de}
\urladdr{\url{https://www.timo-keller.de}}

\subjclass[2020]{11M38 (Primary) 11G10 (Secondary)}

\date{\today}

\begin{abstract}
Let $A$ be an abelian variety of dimension $g$ over a finite field $\F_q$. We show
that if $q$ is sufficiently large relative to $g$, the $g$ point counts
$\#A(\F_{q^i})$ for $1 \leq i \leq g$ determine the zeta function of $A$,
equivalently the characteristic polynomial of its Frobenius endomorphism, and
hence the isogeny class of $A$. This count is best possible for $g=2$ and $g=4$,
but not in general: for $g=3$ two point counts already determine the zeta
function, whereas a single count never does. The proof combines the functional equation of the
$L$\nobreakdash-polynomial with Newton's identities and an inductive error analysis that
controls the power sums of the inverse Frobenius eigenvalues with enough
precision to recover them, as integers, by rounding.
\end{abstract}

\maketitle

\section{Introduction}

Let $A$ be an abelian variety of dimension $g$ over the finite field $\F_q$.
Its zeta function is the rational function
\[
    Z(A, T) = \frac{P_1(T)P_3(T)\cdots P_{2g-1}(T)}{P_0(T)P_2(T)\cdots P_{2g}(T)},
    \qquad P_i(T) = \det\bigl(1 - \Frob_q T \mid \Het^i(A_{\ol\F_q}, \Q_\ell)\bigr),
\]
and it is determined by the \emph{$L$\nobreakdash-polynomial} $f_A(T) = P_1(T)$, the reverse
characteristic polynomial of the Frobenius endomorphism, since the isomorphisms
$\Het^i \cong \wedge^i \Het^1$ give $P_i = \wedge^i P_1$. Here, for a polynomial
$h(T) = \prod_{j=1}^{d}(1 - \gamma_j T)$, we write $\wedge^i h$ for the polynomial
\[
    (\wedge^i h)(T) = \prod_{1 \leq j_1 < \cdots < j_i \leq d}\bigl(1 - \gamma_{j_1}\cdots\gamma_{j_i}\,T\bigr)
\]
of degree $\binom{d}{i}$, whose reciprocal roots are the products of $i$ distinct
reciprocal roots $\gamma_j$ of $h$; in particular $\deg P_i = \binom{2g}{i}$.
The $L$\nobreakdash-polynomial in turn determines, and is determined by, the isogeny class
of $A$ by Tate's theorem. On the other hand, $Z(A,T)$ encodes all the point
counts $\#A(\F_{q^n})$, $n \geq 1$. It is therefore natural to ask how many of
these point counts are needed in order to recover $Z(A, T)$ itself.

Since $f_A$ has $2g$ free coefficients, one expects the $2g$ point counts
$\#A(\F_{q^n})$, $1 \leq n \leq 2g$, to suffice; Kedlaya \cite[\S8]{Kedlaya2006} showed that this holds once $q > 256\,g^2$,
since then $\#A(\F_{q^{2n}})$ is available for every $1 \leq n \leq g$ and the
truncated M\"obius inversion of \cref{lem:small-i} recovers $f_A$. The functional
equation satisfied by $f_A$ halves the number of free coefficients to $g$, which
suggests that already $g$ point counts should suffice. Our main result confirms
this for $q$ large.

\begin{theorem}\label[theorem]{thm:main}
There is an explicit constant $Q(g)$, depending only on $g$, with the following
property. If $A$ is an abelian variety of dimension $g$ over $\F_q$ with
$q > Q(g)$, then the $g$ point counts
\[
    \#A(\F_{q^i}), \qquad 1 \leq i \leq g,
\]
determine the $L$\nobreakdash-polynomial $f_A$, and hence the zeta function $Z(A, T)$ and
the isogeny class of $A$. One may take $Q(g) = \bigl(16 g^3 p(2g)\bigr)^{2g+2}$,
where $p(\,\cdot\,)$ denotes the partition function.
\end{theorem}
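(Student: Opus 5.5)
We would work with the normalized inverse eigenvalues. Let $\alpha_1,\dots,\alpha_{2g}$ be the reciprocal roots of $f_A$, so that $f_A(T)=\prod_j(1-\alpha_jT)=1+a_1T+\cdots+a_{2g}T^{2g}$ with $|\alpha_j|=\sqrt q$. The multiset $\{\alpha_j\}$ is stable under $\alpha\mapsto q/\alpha$, and this gives the functional equation $a_{2g-k}=q^{g-k}a_k$. Consequently $f_A$ is determined by the $g$ integers $a_1,\dots,a_g$, equivalently by the power sums $s_k=\sum_j\alpha_j^k$ for $1\le k\le g$ via Newton's identities. Using the same symmetry, $\#A(\F_{q^n})=\prod_j(1-\alpha_j^n)=q^{ng}\prod_j(1-\alpha_j^{-n})$. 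Put $u_k\defeq s_k/q^k=\sum_j\alpha_j^{-k}$, so that $|u_k|\le 2g\,q^{-k/2}$. Taking logarithms gives, for $q$ large,
\[
    L_n\defeq -\log\bigl(q^{-ng}\,\#A(\F_{q^n})\bigr)=\sum_{m\ge1}\frac{u_{nm}}{m},
\]
and each $L_n$ with $1\le n\le g$ is computable from the data. The goal is to recover each $s_n=q^nu_n\in\Z$ by rounding. This requires knowing $u_n$ to within $\tfrac12 q^{-n}$.

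First, we would treat the tail. The terms $m\ge3$ are harmless: $\sum_{m\ge3}|u_{nm}|/m\le 2g\sum_{m\ge3}q^{-nm/2}=O(g\,q^{-3n/2})$, which is $\ll q^{-n}$ once $q$ is large. The obstruction is the $m=2$ term $u_{2n}/2=s_{2n}/(2q^{2n})$, which has size about $g\,q^{-n}$. It therefore has to be computed, not bounded, to absolute precision $o(q^{-n})$; equivalently, $s_{2n}$ is needed up to an error $o(q^n)$. Since $2n\le 2g$, Newton's identities together with the functional equation express $s_{2n}$ as an explicit polynomial in $q$ and $a_1,\dots,a_{\min(2n,g)}$. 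Each monomial has weight $2n$, where $a_k$ has weight $k$ and $q$ has weight $2$, and the number of such monomials is bounded by $p(2n)\le p(2g)$. This is where the partition function enters.

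Second, we would set up an induction that controls approximations of all the $s_k$ simultaneously rather than exact values of the earlier ones. Concretely, we would carry along real approximations $\tilde s_k$ with $|s_k-\tilde s_k|\le \varepsilon_k q^{k/2}$ for explicit small $\varepsilon_k$. The crude first pass $\tilde s_n\defeq q^nL_n$ gives $|s_n-\tilde s_n|\le 2g\cdot q^{0}$ up to lower-order terms, that is, relative error $O(g\,q^{-n/2})$. The key feature is that inserting approximate values into the weighted-homogeneous expression for $s_{2n}$ produces an absolute error of order $p(2g)\,g^{O(1)}\,q^{n}\cdot\max_k\varepsilon_k$. The step-two bound $|a_k|\le\binom{2g}{k}q^{k/2}$ is used here. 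This error is $o(q^n)$ as soon as the $\varepsilon_k$ are a negative power of $q$.

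Third, we would iterate. We substitute the improved $s_{2n}$ back into $u_n=L_n-u_{2n}/2-\sum_{m\ge3}u_{nm}/m$; this improves every $\tilde s_n$ by a further factor of roughly $16g^3p(2g)\,q^{-1/2}$. We repeat until the error is below $\tfrac12$, round to get exact $s_1,\dots,s_g$, and then reconstruct $a_1,\dots,a_g$ and hence $f_A$. Roughly $2g+2$ rounds of halving the exponent deficit should suffice, which is consistent with the shape $Q(g)=(16g^3p(2g))^{2g+2}$. From $f_A$ we then get $Z(A,T)$ via $P_i=\wedge^iP_1$, and the isogeny class follows by Tate's theorem.

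The main obstacle is making this bookkeeping airtight. One has to track how the error in each $s_k$ propagates through the Newton and functional-equation polynomials for $s_{2n}$, and show uniformly in $n\le g$ that each round gains a fixed power of $q$. This needs a careful bound on the sum of absolute values of the coefficients in these identities, of the form $g^{O(1)}p(2g)$, and on the effect of replacing $\log(1-x)$ by its truncation. We would first try to prove a single clean lemma of the form: if every $\tilde s_k$ has relative error at most $\delta$, then after one round every $\tilde s_k$ has relative error at most $16g^3p(2g)\,q^{-1/2}\delta$ plus $O(q^{-1})$. The theorem would then follow by induction on the number of rounds.
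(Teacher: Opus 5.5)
Your central estimate has the right shape: the absolute error in $s_{2n}$ is about $C(g)\,q^n\max_k\varepsilon_k$. It would give a route genuinely different from the paper's. The paper first recovers $s_i$ exactly for $i\le g/2$ by truncated M\"obius inversion. It then treats $g/2<i\le g$ one index at a time, using that $s_1,\dots,s_{i-1}$ are already exact, so each partition in its formula for $s_{2i-1},s_{2i}$ carries at most one inexact factor (apart from $s_i^2$). This is run against the error budget $B_j=g\,q^{-j/2-g/4}D^{j-g}$, whose base case is what forces $q\ge D^{2g}$.

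The gap is in how you propose to close the argument. Your ``clean lemma'' controls one uniform relative error $\delta$ up to an additive $O(q^{-1})$. Iterating it stalls at $\delta\asymp q^{-1}$, i.e.\ $|s_k-\tilde s_k|\lesssim q^{k/2-1}$, which is useless for rounding once $k\ge 3$. The additive term comes from the tail $\sum_{m\ge3}u_{nm}/m$, which at index $n$ is only $O(g\,q^{-n/2})$ in absolute terms; taking the maximum of relative errors over $k$ throws this away. So the multi-round scheme does not reach the threshold $\tfrac12$, and ``$2g+2$ rounds'' is read off from the stated $Q(g)$, not derived.

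Track absolute errors per index instead. Then no iteration is needed, and your second paragraph already finishes the argument. The seed gives $|s_k-\tilde s_k|\le 4g$ for every $k\le g$, so no small-range step is needed either. Write $s_{2n}=\Phi(s_1,\dots,s_g)$, weighted homogeneous of weight $2n$ with coefficients in $\Q[q]$. Every term of $\Phi(s+\Delta)-\Phi(s)$ has a factor $\Delta_k=O(g)$ in place of a factor of size $\asymp q^{k/2}\ge q^{1/2}$, so $|\tilde s_{2n}-s_{2n}|\le C(g)\,q^{n-1/2}$. A single correction $\tilde s_n=q^nL_n-\tilde s_{2n}/(2q^n)$ then has error at most $\tfrac12C(g)q^{-1/2}+4g\,q^{-n/2}$. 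This is below $\tfrac12$ for all $n\le g$ once $q^{1/2}>C(g)+8g$.

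What is still missing is the constant in the statement. The coefficient bound $g^{O(1)}p(2g)$ you hope for fails in your formulation. The closed formula for $s_m$ in terms of $e_1,\dots,e_m$ has absolute coefficient sum $2^m-1$. The substitution $a_{2g-k}=q^{g-k}a_k$ leaves the monomials in $a_1,\dots,a_g$ alone untouched, so the sum stays exponential in $g$. The paper gets $p(2g)$ only because it expands in the opposite direction: $e_m$ through the $s_k$, with coefficients $1/z_\lambda\le1$.

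You therefore need an honest bound for $C(g)$. A majorant estimate through both formulas, using $|s_j|\le 2g\,q^{j/2}$, gives $C(g)=g^{O(g)}$. You then need to check that $q>(16g^3p(2g))^{2g+2}$ implies $q^{1/2}>C(g)+8g$. Without both steps you have not established the stated $Q(g)$.
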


For $g = 1$ the statement is classical and holds for every $q$: the single point
count $\#A(\F_q) = q + 1 + a_1$ determines the coefficient $a_1$, hence
$f_A = 1 + a_1 T + q T^2$, directly. We therefore assume $g \geq 2$ throughout.

It is instructive to phrase the problem geometrically. It is cleanest to use the
\emph{trace polynomial} $h(T) = \prod_{i=1}^{g}(T - \beta_i)$ with $\beta_i =
\alpha_i + q/\alpha_i$ (introduced in \cref{sec:remarks}) in place of $f_A$ itself: its
$g$ coefficients are natural coordinates on an affine space $\mathbf{A}^g$, and the
functional equation is built in. Each point count is a \emph{cyclic resultant} of
$f_A$, since
\[
    c_n = \#A(\F_{q^n}) = \prod_{i=1}^{2g}(1 - \alpha_i^n) = \prod_{\zeta^n = 1} f_A(\zeta)
        = \pm\operatorname{Res}(f_A,\, T^n - 1).
\]
Prescribing the first $g$ point counts therefore cuts out a subscheme of
$\mathbf{A}^g$ by $g$ equations. Passing by M\"obius inversion from the $c_n$ to
the cyclotomic norms $F_n = \prod_{\operatorname{ord}\zeta = n} f_A(\zeta) =
\operatorname{Res}(f_A, \Phi_n)$, where $\Phi_n$ is the $n$-th cyclotomic
polynomial, the $n$-th equation has degree $\phi(n)$; for a complete intersection
B\'ezout's theorem then predicts a zero-dimensional fibre of degree
$\prod_{i=1}^{g}\phi(i)$ over $\C$. Our theorem is the arithmetic statement that,
however many complex points such a fibre carries, \emph{at most one} of them is an
integral point lying in the Weil region cut out by the Riemann hypothesis.

Recovering a polynomial from its cyclic resultants is a natural inverse problem in
its own right, and has applications in the theory of dynamical zeta functions, knot theory (Alexander polynomials and Lehmer's problem), etc. Hillar \cite{Hillar2005} showed that the sequence of cyclic resultants of a generic monic polynomial $P$ over an algebraically closed field of characteristic $0$, determines the polynomial uniquely. Hillar and Levine \cite{HillarLevine2007} made this effective by showing that the first $2^{d+1}$ cyclic resultants determine $P$, where $d = \deg(P)$. They also showed that if $P$ is a generic monic reciprocal polynomial, the first $2 \cdot 3^{d/2}$ cyclic resultants suffice.
This is still far from the conjecture of Sturmfels and Zworski \cite[Conjecture 1.2]{HillarLevine2007}, which predicts that the first $d/2+1$ cyclic resultants should determine a generic monic reciprocal polynomial.

Our $f_A$ is of degree $2g$ and satisfies the ``$q$-reciprocal'' functional equation
$f_A(T) = T^{2g}q^{-g}f_A(q/T)$, which halves the
number of unknown coefficients to $g$. \cref{thm:main} does
better than Sturmfels--Zworski's prediction for reciprocal polynomials, by using extra arithmetic constraints. Concretely, the gain
comes from trading the generality of the reconstruction problem over an algebraically closed field, for the arithmetic constraints special to our setting: integrality of the coefficients, Riemann hypothesis, and the functional equation.

How many point counts are really needed depends on $g$ in an unexpected way. A
single point count never suffices, and for $g=2$ and $g=4$ the number $g$ is
optimal: the first $g-1$ counts do not determine the zeta function for all $A$.
For $g=3$, however, $g$ is \emph{not} optimal, the two counts $\#A(\F_q)$ and
$\#A(\F_{q^2})$ already determine the zeta function for every prime power
$q \geq 16$, so $g$ point counts are not always best possible
(\cref{prop:N3}). For $g \geq 5$ the exact value of $N(g)$ is open; computational evidence for small $q$ and the non-existence of distinct $\Z[q]$-families with matching point counts lead one to guess that $N(5) = N(6) = 4$ (\cref{ques:N56,ques:Ng}).

One may also ask about the asymptotics of $N(g)$: whether the limit
$\lim\limits_{g \to \infty} N(g)/g$ exists and, if not, what its lower and upper limits
are (\cref{ques:Ng}).

The constant
$Q(g)$ we obtain is super-exponential in $g$, and is far from optimal; we have not tried to optimize it. We discuss this and the question
of whether a constant polynomial in $g$ is possible in
\cref{sec:remarks}.

\subsection*{Outline of the argument}
Throughout we work with the power sums $s_n = \sum_{i=1}^{2g} \alpha_i^{-n}$ of the
inverse Frobenius eigenvalues; knowing $s_1, \dots, s_g$ is equivalent to knowing
$f_A$. The two key elementary facts are that $q^n s_n$ is an \emph{integer}
(\cref{lem:integrality}), so that $s_n$ can be recovered by rounding once it is
known to additive error $< \tfrac12 q^{-n}$, and that the point count
$\#A(\F_{q^n})$ determines the slightly perturbed quantity
$\sum_{j \geq 1} s_{jn}/j$ exactly (\cref{lem:counts-to-sums}). The Weil bounds
make the perturbation small but, for $n$ close to $g$, not quite small enough to
round directly. The argument splits into two ranges. For $1 \leq i \leq g/2$ the
point counts over $\F_{q^i}$ and $\F_{q^{2i}}$ are both available, and a
truncated M\"obius inversion recovers $s_i$ exactly (\cref{lem:small-i}). For
$g/2 < i \leq g$ only $\#A(\F_{q^i})$ is available; here we run an induction
(\cref{lem:induction-step}) that, using Newton's identities and the functional
equation, estimates $s_{2i}$ accurately enough from the previously determined
power sums to sharpen the estimate of $s_i$ to error $o(q^{-i})$ and recover it
by rounding.

\subsection*{Acknowledgments}
We thank the organizers of the IAS/PCMI Research Program 2022 ``Number Theory
informed by Computation'' and Kiran Kedlaya for helpful advice. The large language
model Claude Opus~4.8 was used as an assistant in checking the arguments of this
article, in writing the accompanying \textsc{Sage} script, and in carrying out the
Lean formalization discussed at the end of \cref{sec:remarks}; all mathematical
content remains the responsibility of the authors.

\section{Preliminaries on the Frobenius eigenvalues}\label{sec:prelim}

Throughout the article, we fix an embedding $\ol\Q \inj \C$. Let $A$ be an
abelian variety of dimension $g$ over a finite field $\F_q$. Let
$\alpha_i \in \ol\Q$, $1 \leq i \leq 2g$, be the roots of the characteristic
polynomial
\[
    f_A(T) = (1 - \alpha_1T)(1 - \alpha_2T)\cdot \ldots \cdot(1 - \alpha_{2g}T)
           = a_0 + a_1T + \ldots + a_{2g}T^{2g}
\]
of the Frobenius acting on the $\ell$-adic Tate module for some $\ell \nmid q$,
independent of $\ell$ and ordered in such a way that $\alpha_i\alpha_{i+g} = q$
for all $1 \leq i \leq g$. This is possible by Poincar\'e duality. Note that
$a_0 = 1$ and $a_{2g} = \prod_{i = 1}^{2g}\alpha_i = q^g$. By the Riemann
hypothesis for $A$, one has $|\alpha_i| = q^{1/2}$ for all
complex embeddings.

\begin{notation}\label[notation]{not:e-s}
Let
\[
    e_n \defeq \sum_{\substack{I \subseteq \{1,\ldots,2g\}\\\#I = n}}\prod_{i \in I}\alpha_i^{-1}
    \qquad (0 \leq n \leq 2g)
\]
be the elementary symmetric polynomial of degree $n$ in the $\alpha_i^{-1}$, and
\[
    s_n \defeq \sum_{i=1}^{2g}\alpha_i^{-n} \qquad (n \geq 1)
\]
their $n$-th power sum. We write $\beta_i \defeq \alpha_i^{-1}$, so
$|\beta_i| = q^{-1/2}$.
\end{notation}

Knowing $f_A$ is equivalent to knowing $e_1, \dots, e_{2g}$ (the $e_n$ are, up to
sign and the normalisation by $a_{2g} = q^g$, the coefficients $a_n$), and by
Newton's identities (\cref{lem:sym-identities}) equivalent to knowing
$s_1, \dots, s_{2g}$. The functional
equation will reduce this to $s_1, \dots, s_g$.

\begin{lemma}[Functional equation]\label[lemma]{lem:functional-eq}
For $1 \leq n \leq g$ one has
\begin{equation} \label{eq:complement of e}
    e_{g + n} = q^{-n}e_{g - n}.
\end{equation}
\end{lemma}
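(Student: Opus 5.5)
The plan is to use the involution $\iota\colon \alpha \mapsto q/\alpha$ on the multiset of roots. By the chosen ordering, $\alpha_i\alpha_{i+g} = q$ for $1 \leq i \leq g$, so $\beta_{i+g} = \alpha_{i+g}^{-1} = \alpha_i/q = q^{-1}\beta_i^{-1}$. Hence $\iota$ permutes the multiset $\{\beta_1,\dots,\beta_{2g}\}$ via $\beta \mapsto q^{-1}\beta^{-1}$, swapping $\beta_i$ and $\beta_{i+g}$.

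Write $\Pi \defeq \prod_{i=1}^{2g}\beta_i = \prod_{i=1}^{g}(\beta_i\beta_{i+g}) = q^{-g}$. For a subset $I \subseteq \{1,\dots,2g\}$ with $\#I = g+n$, let $I^c$ be its complement, so $\#I^c = g-n$. Then
\[
\prod_{i\in I}\beta_i = \frac{\Pi}{\prod_{i\in I^c}\beta_i} = q^{-g}\prod_{i \in I^c}\beta_i^{-1}.
\]
Next I will use $\beta_i^{-1} = q\,\beta_{\sigma(i)}$, where $\sigma$ is the involution $i \leftrightarrow i+g$. This gives
\[
\prod_{i\in I^c}\beta_i^{-1} = q^{g-n}\prod_{j\in\sigma(I^c)}\beta_j,
\]
so that
\[
\prod_{i\in I}\beta_i = q^{-n}\prod_{j\in\sigma(I^c)}\beta_j.
\]

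Finally, the map $I \mapsto \sigma(I^c)$ is a bijection from $(g+n)$-subsets onto $(g-n)$-subsets, since complementation and $\sigma$ are both bijections. Summing the previous identity over all $I$ with $\#I = g+n$ then yields $e_{g+n} = q^{-n}e_{g-n}$. This also holds for $n = g$, where $e_0 = 1$ and $e_{2g} = q^{-g}$.

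There is no real obstacle here. The only point needing care is the justification of the pairing $\alpha_i\alpha_{i+g} = q$, which I take from the setup via Poincaré duality, since the paper fixed that ordering. An alternative route would be to read off the coefficients from the functional equation $f_A(T) = T^{2g}q^{-g}f_A(q/T)$, but the subset bijection above is self-contained.
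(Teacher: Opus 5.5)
Your proposal is correct and follows the same route as the paper's proof: write $\prod_{i\in I}\beta_i = q^{-g}\prod_{i\notin I}\beta_i^{-1}$, use the pairing $\beta_i^{-1} = q\,\beta_{\sigma(i)}$ for the involution $\sigma\colon i\leftrightarrow i+g$ to pull out $q^{g-n}$, and sum using the bijection $I \mapsto \sigma(I^c)$ from $(g+n)$-subsets to $(g-n)$-subsets. Your extra check of the edge case $n = g$ ($e_0 = 1$, $e_{2g} = q^{-g}$) is consistent with this.
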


\begin{proof}
Let $\sigma$ be the involution $i \leftrightarrow i+g$, so that
$q/\alpha_i = \alpha_{\sigma(i)}$. For $I \subseteq \{1,\dots,2g\}$ with
$\#I = g+n$, using $\prod_{i=1}^{2g}\alpha_i^{-1} = q^{-g}$,
\[
    \prod_{i \in I}\alpha_i^{-1}
    = q^{-g}\prod_{i \notin I}\alpha_i
    = q^{-g}\prod_{i \notin I}\frac{q}{\alpha_{\sigma(i)}}
    = q^{-g}\,q^{\,g-n}\prod_{i \notin I}\alpha_{\sigma(i)}^{-1}
    = q^{-n}\prod_{i \notin I}\alpha_{\sigma(i)}^{-1}.
\]
As $I$ ranges over the subsets of size $g+n$, the set $\sigma(\{1,\dots,2g\}\setminus I)$
ranges over all subsets of size $g-n$. Summing therefore gives
$e_{g+n} = q^{-n}e_{g-n}$.
\end{proof}

\begin{lemma}[Weil bounds]\label[lemma]{lem:weil-bounds}
For all $n \geq 1$ and $0 \leq m \leq 2g$,
\begin{equation} \label{eq:bound s}
    |s_n| \leq 2g\,q^{-n/2}, \qquad |e_m| \leq \binom{2g}{m} q^{-m/2}.
\end{equation}
\end{lemma}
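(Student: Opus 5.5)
The plan is to derive both inequalities directly from the Riemann hypothesis recalled above. Under the fixed embedding $\ol\Q \inj \C$, it gives $|\beta_i| = |\alpha_i|^{-1} = q^{-1/2}$ for every $1 \leq i \leq 2g$, and this holds in every complex embedding. Both bounds then follow from the triangle inequality applied to the defining sums in \cref{not:e-s}. The functional equation and the pairing $\alpha_i\alpha_{i+g}=q$ are not needed.

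For the power sums, we have $s_n = \sum_{i=1}^{2g}\beta_i^n$ with $|\beta_i^n| = q^{-n/2}$. Summing $2g$ terms of this absolute value gives $|s_n| \leq 2g\,q^{-n/2}$ for every $n \geq 1$.

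For the elementary symmetric functions, we write $e_m = \sum_{\#I=m}\prod_{i\in I}\beta_i$. Each product has absolute value exactly $q^{-m/2}$, and there are $\binom{2g}{m}$ subsets $I$ of size $m$. The triangle inequality therefore gives $|e_m| \leq \binom{2g}{m}q^{-m/2}$. The case $m=0$ is the trivial equality $e_0 = 1$.

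There is no real obstacle here. The only point that needs care is that the Riemann hypothesis controls $|\alpha_i|$ in the chosen complex embedding, so the bounds hold for the complex absolute value. Since the bounds hold in every embedding, the estimates also apply to all Galois conjugates, which is how they will be used later.
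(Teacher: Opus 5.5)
Your proposal is correct and is the same argument as the paper's: from $|\beta_i| = q^{-1/2}$, $s_n$ is a sum of $2g$ terms of modulus $q^{-n/2}$ and $e_m$ is a sum of $\binom{2g}{m}$ products each of modulus $q^{-m/2}$, so the triangle inequality gives both bounds. Your remark that the pairing $\alpha_i\alpha_{i+g}=q$ plays no role here is also accurate.
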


\begin{proof}
Immediate from $|\beta_i| = q^{-1/2}$: $s_n$ is a sum of $2g$ terms of modulus
$q^{-n/2}$, and $e_m$ is a sum of $\binom{2g}{m}$ products of $m$ such terms.
\end{proof}

\begin{lemma}[Integrality and rounding]\label[lemma]{lem:integrality}
For every $n \geq 1$,
\[
    q^n s_n = \sum_{i=1}^{2g}\alpha_i^{n} = \Tr\bigl(\Frob_q^n \mid \Het^1(A_{\ol\F_q}, \Q_\ell)\bigr) \in \Z.
\]
Consequently, if $\wt s$ is a known approximation of $s_n$ with
$|s_n - \wt s| < \tfrac12 q^{-n}$, then $s_n$ is determined: it equals $m q^{-n}$,
where $m$ is the unique integer with $|mq^{-n} - \wt s| < \tfrac12 q^{-n}$, namely the
nearest integer to $q^n \wt s$.
\end{lemma}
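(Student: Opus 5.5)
The first assertion is a direct computation, and the rounding statement follows from the fact that distinct multiples of $q^{-n}$ are at least $q^{-n}$ apart.

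\emph{Step 1: the identity.} Since $\beta_i = \alpha_i^{-1}$, we have
\[
    q^n s_n = \sum_{i=1}^{2g} q^n\alpha_i^{-n} = \sum_{i=1}^{2g}\bigl(q/\alpha_i\bigr)^n.
\]
By the ordering fixed in \cref{sec:prelim}, $q/\alpha_i = \alpha_{\sigma(i)}$, where $\sigma$ is the involution $i \leftrightarrow i+g$ used in the proof of \cref{lem:functional-eq}. Because $\sigma$ permutes $\{1,\dots,2g\}$, the last sum equals $\sum_{i=1}^{2g}\alpha_i^n$.

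\emph{Step 2: the trace and integrality.} The $\alpha_i$ are the eigenvalues, with multiplicity, of $\Frob_q$ on $\Het^1(A_{\ol\F_q},\Q_\ell)$. Hence $\sum_i \alpha_i^n$ is the trace of $\Frob_q^n$ on this space. To see that it is an integer, note that by Newton's identities (\cref{lem:sym-identities}) the power sum $\sum_i\alpha_i^n$ is an integer polynomial in the elementary symmetric functions of the $\alpha_i$. These are, up to sign, the coefficients $a_1,\dots,a_{2g}$ of $f_A$, which lie in $\Z$ because the characteristic polynomial of Frobenius has integer coefficients. Alternatively, the $\alpha_i$ are algebraic integers and the sum is fixed by $\Gal(\ol\Q/\Q)$, so it is a rational algebraic integer.

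\emph{Step 3: rounding.} By Step 2, $s_n \in q^{-n}\Z$. Suppose two integers $m \neq m'$ both satisfied $|mq^{-n}-\wt s| < \tfrac12 q^{-n}$ and $|m'q^{-n}-\wt s| < \tfrac12 q^{-n}$. The triangle inequality would then give $|m-m'|\,q^{-n} < q^{-n}$, which is impossible. So at most one such $m$ exists. On the other hand, $m = q^n s_n$ does satisfy the inequality by hypothesis. Multiplying the inequality by $q^n$ gives $|m - q^n\wt s| < \tfrac12$, so this unique $m$ is the nearest integer to $q^n\wt s$.

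None of these steps is a real obstacle. The only point needing care is justifying $\alpha_i \mapsto q/\alpha_i$ as a permutation of the multiset of roots. This is exactly the Poincaré-duality ordering already fixed in \cref{sec:prelim}.
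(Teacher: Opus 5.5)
Your proof is correct and follows the same argument as the paper. The Poincar\'e pairing $\alpha_i \leftrightarrow q/\alpha_i$ permutes the multiset of roots, so $q^n s_n = \sum_i \alpha_i^n$; this is a power sum of the roots of the monic integral polynomial $T^{2g}f_A(1/T)$, hence an integer; and rounding works because distinct points of $q^{-n}\Z$ are at least $q^{-n}$ apart. One small caveat: \cref{lem:sym-identities} is stated only for $n \leq 2g$, so for $n > 2g$ you should use Newton's identities in their general form with $e_k = 0$ for $k > 2g$, or simply rely on your Galois-invariance argument.
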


\begin{proof}
The functional equation $\alpha_i\alpha_{i+g} = q$ says that
$\{q\beta_i\}_{i} = \{q/\alpha_i\}_i = \{\alpha_i\}_i$ as multisets, whence
$q^n s_n = \sum_{i=1}^{2g} (q\beta_i)^n = \sum_{i=1}^{2g} \alpha_i^n$. The latter
is the $n$-th power sum of the roots of the monic integral polynomial
$T^{2g}f_A(1/T) = \prod_{i=1}^{2g}(T - \alpha_i)$, hence an integer (it equals the trace
of $\Frob_q^n$ on $\Het^1$). The rounding statement is clear: the values $mq^{-n}$ for distinct integers
$m$ are at distance $\geq q^{-n}$, so at most one lies
within $\tfrac12 q^{-n}$ of $s_n$, and it is found by rounding $q^n \wt s$.
\end{proof}

\begin{lemma}[Symmetric function identities]\label[lemma]{lem:sym-identities}
The power sums and elementary symmetric functions of the $\beta_i$ are related by
Newton's identities \cite[I.2, (2.11$'$)]{Macdonald1995}
\begin{equation} \label{eq:Newton-Girard}
    n\,e_n = \sum_{i = 1}^n (-1)^{i - 1}e_{n - i}\,s_i \qquad (1 \leq n \leq 2g),
\end{equation}
and, equivalently, by Macdonald's closed form \cite[I.2, (2.14$'$)]{Macdonald1995},
obtained by expanding the generating-function identity
\[
    \sum_{n\geq0} e_n t^n = \exp\Bigl(\sum_{k\geq1}\tfrac{(-1)^{k-1}}{k}s_k t^k\Bigr)
\]
(itself $\log\prod_i(1+\beta_i t) = \sum_{k\geq1}\tfrac{(-1)^{k-1}}{k}s_k t^k$ exponentiated, where $\prod_i(1+\beta_i t) = \sum_n e_n t^n$):
\begin{equation} \label{eq:en by si}
    e_n = (-1)^n\sum_{\substack{m_1 + 2m_2 + \ldots + nm_n = n\\m_k \geq 0}}\;\prod_{k = 1}^n\frac{(-s_k)^{m_k}}{m_k!\,k^{m_k}}.
\end{equation}
In particular, knowing $s_1, \dots, s_m$ exactly determines $e_1, \dots, e_m$
exactly, and conversely.
\end{lemma}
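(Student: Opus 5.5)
The statement to prove is \cref{lem:sym-identities}: Newton's identities, Macdonald's closed form, and the mutual determination of $s_1,\dots,s_m$ and $e_1,\dots,e_m$. I would derive everything from one generating-function identity in the formal power series ring $\C[[t]]$. Set $E(t) = \prod_{i=1}^{2g}(1+\beta_i t) = \sum_{n=0}^{2g} e_n t^n$. Since $E(0)=1$, the formal logarithm is defined, and it is additive over the product. Expanding $\log(1+\beta_i t) = \sum_{k\ge1}(-1)^{k-1}\beta_i^k t^k/k$ and summing over $i$ gives
\[
\log E(t) = \sum_{k\geq1}\tfrac{(-1)^{k-1}}{k}s_k t^k .
\]
Exponentiating then yields the displayed identity $E(t)=\exp(\cdots)$.

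For Newton's identities, I would take the logarithmic derivative: $tE'(t) = E(t)\cdot\sum_{k\ge1}(-1)^{k-1}s_k t^k$. Comparing coefficients of $t^n$ gives $n e_n = \sum_{i=1}^n (-1)^{i-1}e_{n-i}s_i$ for every $n\ge1$, in particular for $1\le n\le 2g$.

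For the closed form \eqref{eq:en by si}, I would write the exponential as a product of exponentials, $\prod_{k\ge1}\exp\bigl((-1)^{k-1}s_kt^k/k\bigr)$, and expand each factor as $\sum_{m_k\ge0}\bigl((-1)^{k-1}s_k/k\bigr)^{m_k}t^{km_k}/m_k!$. Collecting the coefficient of $t^n$ gives a sum over the partitions $\sum k m_k = n$. The only point needing care is the sign. We have $\prod_k(-1)^{(k-1)m_k} = (-1)^{\sum k m_k - \sum m_k} = (-1)^n\prod_k(-1)^{m_k}$, so the summand becomes $(-1)^n\prod_k (-s_k)^{m_k}/(m_k!\,k^{m_k})$, as claimed. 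This sign bookkeeping is the only place an error could slip in; everything else is formal.

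The final assertion follows from Newton's identities by induction on $n$, in both directions:
\begin{itemize}
\item Given $s_1,\dots,s_m$, the identity for $n\le m$ expresses $e_n$ through $e_0=1,\dots,e_{n-1}$ and $s_1,\dots,s_n$. Dividing by $n$ is harmless in characteristic $0$.
\item Conversely, given $e_1,\dots,e_m$, the same identity can be solved for $s_n$, since the term $i=n$ is $(-1)^{n-1}e_0 s_n$ with $e_0=1$. It expresses $s_n$ through $e_1,\dots,e_n$ and $s_1,\dots,s_{n-1}$.
\end{itemize}
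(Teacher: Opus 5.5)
Your proposal is correct and takes the same route as the paper: the paper also derives Newton's identities from the logarithmic derivative of $\prod_i(1+\beta_i t)=\sum_n e_n t^n$ and obtains the closed form by expanding the exponential generating function, citing Macdonald; your sign computation gives exactly Macdonald's sign $(-1)^{n-\ell(\lambda)}$. Beyond what the paper writes, you spell out the sign bookkeeping and give the inductive argument for the final ``in particular'' clause, both of which the paper leaves implicit or to the reference.
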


\begin{proof}
These are standard \cite[I.2, (2.11$'$) and (2.14$'$)]{Macdonald1995}. Equating
coefficients of $t^n$ in the logarithmic derivative of the generating function
$\prod_{i=1}^{2g}(1+\beta_i t) = \sum_{n=0}^{2g} e_n t^n$ gives \eqref{eq:Newton-Girard};
expanding the exponential generating function (2.14$'$), equivalently Macdonald's
closed form $e_n = \sum_{\lambda \vdash n}\eps_\lambda\,z_\lambda^{-1}\prod_k s_k^{m_k}$
(sum over partitions $\lambda = (1^{m_1}2^{m_2}\cdots)$ of $n$) with
$z_\lambda = \prod_k k^{m_k}m_k!$ and $\eps_\lambda = (-1)^{n - \ell(\lambda)}$,
gives \eqref{eq:en by si}.
\end{proof}

We record the elementary error-propagation rule used repeatedly below: if
$x = \wt x + \Delta x$ and $y = \wt y + \Delta y$ with $|x - \wt x| \leq \Delta x$
and $|y - \wt y| \leq \Delta y$ (we use $\Delta(\,\cdot\,)$ both for the error and
for a bound on it), then
\[
    |xy - \wt x\,\wt y| \leq |x|\,\Delta y + |y|\,\Delta x + \Delta x\,\Delta y,
    \qquad |x + y - \wt x - \wt y| \leq \Delta x + \Delta y.
\]
If $x$ is known exactly we take $\Delta x = 0$.

\section{From point counts to power sums}\label{sec:counts}

Let
\[
    c_n \defeq \#A(\F_{q^n}) = \prod_{i=1}^{2g}(1 - \alpha_i^n)
\]
be the $n$-th point count of $A$.

\begin{lemma}[Point counts determine perturbed power sums]\label[lemma]{lem:counts-to-sums}
For all $n \geq 1$,
\begin{equation}\label{eq:cn_to_sn}
    \log\frac{q^{gn}}{c_n} = \sum_{j \geq 1}\frac{s_{jn}}{j},
\end{equation}
and, by M\"obius inversion,
\begin{equation}\label{eq:mobius}
    s_n = \sum_{j \geq 1}\frac{\mu(j)}{j}\,\log\frac{q^{gjn}}{c_{jn}}.
\end{equation}
\end{lemma}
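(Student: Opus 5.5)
We start from the product formula $c_n = \prod_{i=1}^{2g}(1-\alpha_i^n)$ and rewrite each factor so that a power series in the small quantities $\beta_i^n = \alpha_i^{-n}$ appears. Since $|\alpha_i^n| = q^{n/2} > 1$, we factor
\[
    1 - \alpha_i^n = -\alpha_i^n\bigl(1 - \alpha_i^{-n}\bigr).
\]
Taking the product over $i$ gives
\[
    c_n = (-1)^{2g}\Bigl(\prod_i \alpha_i\Bigr)^n \prod_i (1-\beta_i^n) = q^{gn}\prod_{i=1}^{2g}(1-\beta_i^n),
\]
using $\prod_i\alpha_i = q^g$ from \cref{sec:prelim}. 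Hence $q^{gn}/c_n = \prod_i (1-\beta_i^n)^{-1}$. This is a positive real number: $c_n$ is a positive integer, since $A(\F_{q^n})$ contains the origin, so the real logarithm is meaningful.

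Next we expand $-\log(1-\beta_i^n) = \sum_{j\ge1}\beta_i^{jn}/j$, which converges absolutely because $|\beta_i^n| = q^{-n/2} < 1$. Summing over $i$ and exchanging the two sums, which is justified by absolute convergence, gives $\sum_{j\ge1} s_{jn}/j$. The one point needing care is that the sum of the principal branches $\sum_i \log(1-\beta_i^n)^{-1}$ equals the real logarithm of the product. To see this, note that each principal logarithm has imaginary part in $(-\pi/2,\pi/2)$, since $\operatorname{Re}(1-\beta_i^n) > 0$. The sum of these logarithms is therefore a logarithm of a positive real number with imaginary part of absolute value less than $2g\pi/2$. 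That alone does not force it to be real. Better: the total sum equals $\sum_j s_{jn}/j$, and each $s_{jn}$ is real because the multiset $\{\beta_i\}$ is closed under complex conjugation, $f_A$ having real coefficients. So the sum is a real logarithm of $q^{gn}/c_n$, which is \eqref{eq:cn_to_sn}. I expect this branch check to be the only subtle step.

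For \eqref{eq:mobius}, set $L_n \defeq \log(q^{gn}/c_n) = \sum_{j\ge1} s_{jn}/j$. Then
\[
    \sum_{j\ge1}\frac{\mu(j)}{j}L_{jn} = \sum_{j,k\ge1}\frac{\mu(j)}{jk}s_{jkn} = \sum_{m\ge1}\frac{s_{mn}}{m}\sum_{j\mid m}\mu(j) = s_n.
\]
The rearrangement is justified by absolute convergence, since
\[
    \sum_{j,k}\frac{|s_{jkn}|}{jk} \le 2g\sum_{m}\frac{d(m)}{m}\,q^{-mn/2} < \infty
\]
by \cref{lem:weil-bounds}. The last equality uses $\sum_{j\mid m}\mu(j) = [m=1]$.
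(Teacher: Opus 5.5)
Your proof is correct and follows the paper's route: you factor $c_n = q^{gn}\prod_i(1-\beta_i^n)$, expand $-\log(1-\beta_i^n)$ as a power series, and obtain the M\"obius inversion by regrouping over $m = jk$, with absolute convergence coming from \cref{lem:weil-bounds}. Your extra checks fill in details the paper leaves implicit: the sign $(-1)^{2g}$, the reality of $\sum_j s_{jn}/j$ so that it matches the real logarithm, and the explicit $d(m)$ bound for the double sum. In a final write-up, drop the abandoned imaginary-part argument.
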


\begin{proof}
Factoring out the dominant term and using $\prod_{i=1}^{2g} \alpha_i^n = q^{gn}$,
\[
    c_n = \prod_{i=1}^{2g}(1-\alpha_i^n) = \Bigl(\prod_{i=1}^{2g}\alpha_i^n\Bigr)\prod_{i=1}^{2g}(1-\beta_i^n)
        = q^{gn}\prod_{i=1}^{2g}(1-\beta_i^n),
\]
so that, since $|\beta_i^n| = q^{-n/2} < 1$,
\[
    \log\frac{q^{gn}}{c_n} = -\sum_{i=1}^{2g} \log(1-\beta_i^n)
        = \sum_{i=1}^{2g}\sum_{j\geq1}\frac{\beta_i^{jn}}{j} = \sum_{j\geq1}\frac{s_{jn}}{j}.
\]
This is \eqref{eq:cn_to_sn}, which is \cite[\S8]{Kedlaya2006} rewritten in our
normalisation (Kedlaya works with $q^{-gn}c_n$ and $\tfrac1n s_n$ in place of $c_n$ and
$s_n$).

For \eqref{eq:mobius}, write
$L_m \defeq \log(q^{gm}/c_m) = \sum_{k\geq1} s_{km}/k$ and compute
\[
    \sum_{j\geq1}\frac{\mu(j)}{j}L_{jn}
    = \sum_{j\geq1}\frac{\mu(j)}{j}\sum_{k\geq1}\frac{s_{jkn}}{k}
    = \sum_{m\geq1}\frac{s_{mn}}{m}\sum_{j\mid m}\mu(j)
    = s_n,
\]
the inner sum being $1$ if $m=1$ and $0$ otherwise. All series converge
absolutely by \eqref{eq:bound s}.
\end{proof}

We abbreviate $L_m \defeq \log(q^{gm}/c_m)$; thus $L_m$ is known exactly whenever
$m \leq g$, and $s_m = L_m - \sum_{j\geq2} s_{jm}/j$.

\begin{lemma}[Seed estimate]\label[lemma]{lem:seed}
For $q \geq 2$ and $i \geq 1$,
\begin{equation}\label{eq:seed}
    |L_i - s_i| = \Bigl|\sum_{j\geq2}\frac{s_{ji}}{j}\Bigr| \leq 4g\,q^{-i}.
\end{equation}
\end{lemma}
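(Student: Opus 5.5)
The plan is to apply the Weil bound of \cref{lem:weil-bounds} term by term and sum the resulting geometric series. The equality $L_i - s_i = \sum_{j\geq2} s_{ji}/j$ is \eqref{eq:cn_to_sn} with the $j=1$ term moved to the left. The series converges absolutely, as noted in the proof of \cref{lem:counts-to-sums}. So only the inequality needs proof.

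Using $|s_{ji}| \leq 2g\,q^{-ji/2}$ from \eqref{eq:bound s}, I would bound
\[
    \Bigl|\sum_{j\geq2}\frac{s_{ji}}{j}\Bigr| \leq 2g\sum_{j\geq2}\frac{q^{-ji/2}}{j}
    \leq g\sum_{j\geq2} q^{-ji/2}
    = g\,\frac{q^{-i}}{1-q^{-i/2}}.
\]
Here I crudely replace $1/j$ by $1/2$ for all $j \geq 2$. The remaining step is to check that $1/(1-q^{-i/2}) \leq 4$. For $q \geq 2$ and $i \geq 1$ we have $q^{-i/2} \leq 2^{-1/2} \approx 0.707$, so $1/(1-q^{-i/2}) \leq 1/(1-0.7072) < 3.42 < 4$. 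This gives $|L_i - s_i| \leq 4g\,q^{-i}$.

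There is no real obstacle here. The only point needing care is that the worst case $q=2$, $i=1$ still fits under the constant $4$. The crude factor $1/2$ leaves enough room, so no finer estimate of $\sum_{j \geq 2} q^{-ji/2}/j$ is required. If one wanted a sharper constant, one could keep the $j=2$ term separate, but that is unnecessary for \eqref{eq:seed}.
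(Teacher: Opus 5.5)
Your proposal is correct and is essentially the paper's proof. Both apply the Weil bound $|s_{ji}| \leq 2g\,q^{-ji/2}$ term by term, replace $1/j$ by $1/2$, and sum the geometric series to $g\,q^{-i}/(1-q^{-i/2})$. Both then bound the last factor by $4$ using $1 - 2^{-1/2} \geq \tfrac14$.
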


\begin{proof}
Using \eqref{eq:bound s} and extracting the geometric series,
\[
    \Bigl|\sum_{j\geq2}\frac{s_{ji}}{j}\Bigr|
    \leq \sum_{j\geq2}\frac{2g\,q^{-ji/2}}{j}
    \leq g\,q^{-i}\sum_{j\geq0}q^{-ji/2}
    = g\,q^{-i}\frac{1}{1-q^{-i/2}}
    \leq 4g\,q^{-i},
\]
where the last step uses $q \geq 2$, $i \geq 1$, so $1-q^{-i/2}\geq 1 - 2^{-1/2} \geq \tfrac14$.
\end{proof}

Thus $L_i$ already determines $s_i$ to within $4g\,q^{-i}$. To round by
\cref{lem:integrality} we would need the error below $\tfrac12 q^{-i}$, which
\eqref{eq:seed} does not give; sharpening it is the content of the next two
sections.

\section{The small range \texorpdfstring{$i \leq g/2$}{i <= g/2}}\label{sec:small}

\begin{lemma}\label[lemma]{lem:small-i}
Let $1 \leq i \leq g/2$ and suppose $q > 256\,g^2$. Then the truncated M\"obius sum
\[
    \wt s_i \defeq \sum_{\substack{j\geq1\\ ji \leq g}}\frac{\mu(j)}{j}\,L_{ji},
\]
which depends only on $c_1, \dots, c_g$, satisfies $|s_i - \wt s_i| < \tfrac12 q^{-i}$.
In particular $s_i$ is determined by $c_1, \dots, c_g$.
\end{lemma}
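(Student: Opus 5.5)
We compare the truncated sum $\wt s_i$ with the full Möbius expansion \eqref{eq:mobius}. By \cref{lem:counts-to-sums}, $s_i = \sum_{j\geq1}\frac{\mu(j)}{j}L_{ji}$, so
\[
    s_i - \wt s_i = \sum_{\substack{j\geq1\\ ji > g}}\frac{\mu(j)}{j}\,L_{ji}.
\]
Since $i \leq g/2$, the terms with $j=1$ and $j=2$ have $ji \leq g$ and are therefore included in $\wt s_i$. The remainder thus only involves indices $j$ with $j \geq 3$ and $ji > g$. It is cleaner, though, to avoid bounding the $L_{ji}$ individually and to instead substitute $L_{ji} = \sum_{k\geq1} s_{jki}/k$ and regroup by $m = jk$, exactly as in the proof of \eqref{eq:mobius}. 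This gives
\[
    s_i - \wt s_i = \sum_{m\geq1}\frac{s_{mi}}{m}\Bigl(\sum_{j\mid m}\mu(j) - \sum_{\substack{j\mid m\\ ji\leq g}}\mu(j)\Bigr)
    = -\sum_{m\geq1}\frac{s_{mi}}{m}\sum_{\substack{j\mid m\\ ji\leq g}}\mu(j),
\]
using that the full divisor sum vanishes for $m>1$ and that the $m=1$ term cancels against $j=1$. Writing $J = \lfloor g/i\rfloor \geq 2$, the inner sum equals $1$ when $m=1$, so the $m=1$ contribution cancels. For $2 \leq m \leq J$ all divisors of $m$ satisfy $ji \leq g$, so the inner sum is the full sum $0$. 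Hence only $m \geq J+1 \geq 3$ contribute.

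For the remaining terms we bound the inner sum crudely by the number of divisors $d(m)\leq m$. This gives
\[
    |s_i - \wt s_i| \leq \sum_{m\geq 3}|s_{mi}| \leq 2g\sum_{m\geq3}q^{-mi/2}
    = 2g\,q^{-3i/2}\,\frac{1}{1-q^{-i/2}}
\]
by \cref{lem:weil-bounds}. With $q > 256g^2 \geq 1024$, the geometric factor is at most $2$, so the error is at most $4g\,q^{-3i/2}$. We need this to be $< \tfrac12 q^{-i}$, that is, $8g < q^{i/2}$, and it suffices to check $i=1$: $q^{1/2} > 16g > 8g$. Rounding via \cref{lem:integrality} then determines $s_i$. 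Since $\wt s_i$ uses only $L_m$ with $m \leq g$, it depends only on $c_1,\dots,c_g$.

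The only delicate step is the regrouping identity, specifically verifying that the terms $2\leq m\leq J$ cancel exactly. Once that is in place, the bound is a routine geometric series. If the regrouping turns out awkward, the fallback is to bound the $L_{ji}$ directly: $|L_{ji}| \leq |s_{ji}| + 4g q^{-ji}$ by \cref{lem:seed}, together with $j\geq3$. This gives a weaker but still sufficient bound of the form $\sum_{j\geq3}\frac{1}{j}\bigl(2gq^{-ji/2}+4gq^{-ji}\bigr)$, which is again $O(g q^{-3i/2})$ and is comfortably below $\tfrac12 q^{-i}$ under the hypothesis $q > 256g^2$.
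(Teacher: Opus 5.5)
Your proof is correct, but it is organised differently from the paper's.

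\textbf{The paper's route.} The paper never regroups. It bounds each omitted term directly by $|L_m| \leq 4g\,q^{-m/2}$. It then sums the geometric series from the first omitted index $j_0$, which satisfies $j_0 i \geq g+1$. This gives $|s_i - \wt s_i| \leq 8g\,q^{-(g+1)/2}$, so the rounding condition becomes $q^{(g+1)/2-i} > 16g$. That condition is tightest at $i = g/2$, and this is exactly where the hypothesis $q > 256\,g^2$ comes from.

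\textbf{Your route.} You regroup by $m = jk$, show that the coefficient of $s_{mi}/m$ vanishes for $m \leq J$, and bound it by $d(m) \leq m$. You then use only $m \geq 3$, which gives $4g\,q^{-3i/2}$. Your rounding condition is $q^{i/2} > 8g$, tightest at $i = 1$, so your version already works under $q > 64\,g^2$.

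\textbf{Comparison.} The two bounds are complementary. The paper's has the better $q$-exponent for small $i$, yours for $i$ close to $g/2$; for the lemma as stated either suffices. Because you only exploit $m \geq 3$ in the end, the regrouping is not really needed. Your fallback, bounding $L_{ji}$ directly for $j \geq 3$, gives the same order of bound. It is essentially the paper's argument with ``$j \geq 3$'' in place of ``$j_0 i \geq g+1$''.

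\textbf{A small slip.} Your displayed identity $s_i - \wt s_i = -\sum_{m\geq1}\frac{s_{mi}}{m}\sum_{j\mid m,\ ji\leq g}\mu(j)$ should run over $m \geq 2$. As written, its $m=1$ term is $-s_i$, not $0$, which contradicts your next sentence. It is cleaner to write the remainder as
$s_i - \wt s_i = \sum_{m\geq1}\frac{s_{mi}}{m}\sum_{j\mid m,\ ji>g}\mu(j)$.
In this form the vanishing for all $m \leq J$, including $m=1$, is immediate: every divisor $j$ of such an $m$ satisfies $ji \leq Ji \leq g$. This also avoids passing through the complementary divisor sum.
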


\begin{proof}
By \eqref{eq:mobius}, $s_i - \wt s_i = \sum_{j:\,ji>g}\frac{\mu(j)}{j}L_{ji}$. By
\eqref{eq:bound s}, for $m \geq 1$,
\[
    |L_m| \leq \sum_{k\geq1}\frac{2g\,q^{-km/2}}{k} \leq \frac{2g\,q^{-m/2}}{1-q^{-m/2}} \leq 4g\,q^{-m/2},
\]
where the last step uses the hypothesis $q > 256\,g^2$, so that $q^{-m/2} \leq q^{-1/2} < \tfrac12$ and hence $1 - q^{-m/2} > \tfrac12$.
Let $j_0$ be the smallest $j$ with $ji>g$; then
$j_0 i \geq g+1$, and
\[
    |s_i - \wt s_i| \leq \sum_{j\geq j_0}|L_{ji}|
    \leq 4g\sum_{j\geq j_0}q^{-ji/2}
    \leq 4g\,\frac{q^{-j_0 i/2}}{1-q^{-i/2}}
    \leq 8g\,q^{-(g+1)/2}.
\]
The last step uses the hypothesis $q > 256\,g^2$ (so that $q^{-i/2} \leq q^{-1/2} < \tfrac12$) to
bound the geometric factor by $\frac{1}{1-q^{-i/2}} \leq 2$.
We need $8g\,q^{-(g+1)/2} < \tfrac12 q^{-i}$, i.e. $q^{(g+1)/2 - i} > 16g$. Since
$i \leq g/2$ we have $(g+1)/2 - i \geq \tfrac12$, so $q^{1/2} > 16g$, i.e.
$q > 256\,g^2$, suffices. The conclusion follows from \cref{lem:integrality}.
\end{proof}

\section{The large range \texorpdfstring{$g/2 < i \leq g$}{g/2 < i <= g}}\label{sec:large}

Here $c_{2i}$ is no longer available ($2i > g$), and \eqref{eq:seed} is too weak
to recover $s_i$. We argue by induction on $i$, sharpening the seed estimate of
$s_{2i}$ first.

Fix the constant
\[
    D \defeq 16\,g^3 p(2g),
\]
where $p(\,\cdot\,)$ is the partition function, and set
\[
    B_j \defeq g\,q^{-j/2 - g/4}\,D^{\,j-g} \qquad (j \geq 1).
\]
We will propagate the bound 
\begin{align}
    \label{eq:propagatingerrorbound}
    \Delta s_j \leq B_j
\end{align}
from the range $j \in [1,g]$ to the
range $j \in [1,2g]$.
Note that, for a fixed $g$ and large enough $q$, $B_j$ is decreasing
in $j$ as $\sim (Dq^{-1/2})^j$, while the error $\Delta s_j$ of the seed estimate (\ref{eq:seed}) coming from the $j^{th}$ point count decreases as $\sim q^{-j}$. So for $1 \leq j \leq g$, where we do have point count information, the error bound (\ref{eq:propagatingerrorbound}) holds easily:
indeed, for $1 \leq j \leq g/2$ we have $\Delta s_j = 0$ by
\cref{lem:small-i}, so the bound is trivial; for $g/2 < j \leq g$, this is shown in \cref{lem:base} which forms the base case of
our inductive method. The induction step (\cref{lem:induction-step}) then
carries it from $[1,g]$ up to the desired range $[1,2g]$.

\subsection{Expressing the odd and even doubled power sums}\label{ss:formula}

\begin{lemma}\label[lemma]{lem:formula}
Let $g/2 < i \leq g$ and write $m \in \{2i-1, 2i\}$, assuming $m > g$, and set
$n \defeq m - g > 0$. Then, isolating the top term $s_m$ in \eqref{eq:en by si}
for $e_m$ and substituting $e_m = q^{-n}e_{g-n}$ from \eqref{eq:complement of e},
\begin{equation}\label{eq:sm-formula}
    \frac{s_m}{m} = -q^{-n}\!\!\sum_{\substack{\sum_k k m_k = 2g-m\\ m_k\geq0}}\prod_{k=1}^{2g-m}\frac{(-s_k)^{m_k}}{m_k!\,k^{m_k}}
    \;+\!\!\sum_{\substack{\sum_k k m_k = m\\ m_k\geq0,\ m_m = 0}}\prod_{k=1}^{m-1}\frac{(-s_k)^{m_k}}{m_k!\,k^{m_k}}.
\end{equation}
\end{lemma}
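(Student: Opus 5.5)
The plan is a direct computation: take the closed form \eqref{eq:en by si} for $e_m$, separate off the single term containing $s_m$, apply the functional equation \eqref{eq:complement of e} to $e_m$, and rewrite $e_{g-n}$ again by \eqref{eq:en by si}.

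\textbf{Checking the hypotheses.} Since $g/2 < i \leq g$ and $m \in \{2i-1, 2i\}$, we have $m \leq 2g$. So \eqref{eq:en by si} applies to $e_m$. Moreover $n = m - g$ satisfies $1 \leq n \leq g$, which is exactly the range of \cref{lem:functional-eq}. Hence $e_m = e_{g+n} = q^{-n} e_{g-n}$. Note also that $g - n = 2g - m$.

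\textbf{Isolating the top term.} In \eqref{eq:en by si} for $e_m$, the constraint $\sum_k k m_k = m$ forces $m_m \in \{0, 1\}$. If $m_m = 1$, then every other $m_k$ vanishes, so this is the single partition $(m)$. Its term is
\[
(-1)^m \frac{-s_m}{m} = (-1)^{m+1}\frac{s_m}{m}.
\]
Writing $\Sigma'$ for the sum over partitions with $m_m = 0$, which involves only $s_1, \dots, s_{m-1}$, we get
\[
e_m = (-1)^{m+1}\frac{s_m}{m} + (-1)^m \Sigma'.
\]

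\textbf{Applying the functional equation.} Expand $e_{2g-m}$ by \eqref{eq:en by si}. Its sign prefactor is $(-1)^{2g-m} = (-1)^m$. Writing $\Sigma''$ for the sum over partitions of $2g-m$, this gives $e_{2g-m} = (-1)^m \Sigma''$. Equating the two expressions for $e_m$ yields
\[
(-1)^{m+1}\frac{s_m}{m} + (-1)^m \Sigma' = q^{-n}(-1)^m \Sigma''.
\]
Multiplying through by $(-1)^{m+1}$ gives
\[
\frac{s_m}{m} = -q^{-n}\Sigma'' + \Sigma',
\]
which is \eqref{eq:sm-formula}.

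\textbf{Main obstacle.} The only real point of care is sign bookkeeping. One must confirm that the two prefactors $(-1)^m$ and $(-1)^{2g-m}$ agree, and that exactly one partition of $m$ has $m_m \neq 0$. Everything else is substitution.
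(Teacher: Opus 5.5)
Your proposal is correct and follows the same route as the paper: separate off the single partition with $m_m = 1$ in the closed form for $e_m$, substitute $e_m = q^{-n}e_{g-n}$ from the functional equation, expand $e_{g-n} = e_{2g-m}$ by the same closed form, and solve for $s_m/m$. Your explicit checks that $1 \leq n \leq g$ and that $(-1)^{2g-m} = (-1)^m$ make precise the range condition and the sign bookkeeping that the paper leaves implicit.
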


\begin{proof}
In \eqref{eq:en by si} for $e_m$, the partition with $m_m = 1$ (all other
$m_k=0$) contributes $(-1)^m\frac{-s_m}{m} = (-1)^{m-1}\frac{s_m}{m}$; separating
it off gives
\[
    e_m = (-1)^{m-1}\frac{s_m}{m} + (-1)^m\!\!\sum_{\substack{\sum k m_k = m\\ m_m=0}}\prod_{k<m}\frac{(-s_k)^{m_k}}{m_k!\,k^{m_k}}.
\]
Solving for $s_m/m$ and replacing $e_m$ by $q^{-n}e_{g-n}$, with $e_{g-n}$
expanded by \eqref{eq:en by si} (here $g-n = 2g-m$), yields
\eqref{eq:sm-formula}.
\end{proof}

\subsection{The inductive estimate}\label{ss:induction}

\begin{lemma}[Induction step]\label[lemma]{lem:induction-step}
Let $g/2 < i \leq g$. Suppose
\begin{enumerate}
    \item $s_j$ is known exactly for all $j \leq i-1$, and
    \item $\Delta s_j \leq B_j$ for all $i \leq j \leq 2i-2$.
\end{enumerate}
Then $\Delta s_{2i} \leq B_{2i}$, and also $\Delta s_{2i-1} \leq B_{2i-1}$ provided
$2i-1 > g$.
\end{lemma}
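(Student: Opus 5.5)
Apply \cref{lem:formula} with $m = 2i$, and also with $m = 2i-1$ when $2i-1>g$. Set $n = m-g$. The formula \eqref{eq:sm-formula} writes $s_m$ as a polynomial in $s_1,\dots,s_{m-1}$, so we define the estimate $\wt s_m$ by the same formula with each $s_k$ replaced by its current estimate $\wt s_k$. For $k\le i-1$ the estimate is exact. For $i\le k\le m-1$ we use the estimate with error at most $B_k$ from hypothesis (2). This needs $m-1\le 2i-2$ when $m=2i-1$. When $m=2i$ the term $s_{2i-1}$ also appears, so we first treat $m=2i-1$ and then use $\Delta s_{2i-1}\le B_{2i-1}$ in the case $m=2i$. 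If $2i-1\le g$, then $2i-1\le g$ lies in the base range, and $\Delta s_{2i-1}\le B_{2i-1}$ must be supplied there (by \cref{lem:base} or \cref{lem:small-i}). We treat that as available.

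The error bound proceeds monomial by monomial. Each monomial $\prod_k s_k^{m_k}$ attached to a partition of $N$ (where $N=m$ or $2g-m$) has modulus at most $(2g)^{\sum m_k} q^{-N/2}$ by \cref{lem:weil-bounds}. The coefficient $1/\prod m_k!\,k^{m_k}$ is at most $1$. Expanding the product with the error-propagation rule, the error of a monomial is bounded by a sum over the factors $s_k$ with $k\ge i$. For each such factor the contribution is
\[
\bigl(\text{bound on the remaining factors}\bigr)\cdot B_k\cdot(1+o(1)),
\]
where the remaining factors are bounded by $(2g)^{\ell} q^{-(N-k)/2}$. Cross terms are of lower order, since $B_k\ll q^{-k/2}$ once $q$ is large (it gains $q^{-g/4}D^{k-g}$).

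The key observation is that in the second sum of \eqref{eq:sm-formula} every part $k$ with $i\le k\le m-1$ satisfies $k\ge i>m-k$. Hence at most one uncertain factor occurs, and the complementary factors have total weight $m-k$. That contribution is bounded by
\[
(2g)^{m-k}\,q^{-(m-k)/2}\,g\,q^{-k/2-g/4}D^{k-g}.
\]
We compare it with $B_m = g\,q^{-m/2-g/4}D^{m-g}$. The ratio is $(2g)^{m-k}D^{k-m}\le (2g/D)^{m-k}$, which is tiny. Summing over the at most $p(m)\le p(2g)$ partitions and over the at most $m\le 2g$ positions gives at most $2g\,p(2g)\cdot (2g/D)\,B_m$, which is below $B_m/2$ since $D=16g^3p(2g)$.

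The first sum carries the factor $q^{-n}$. It involves partitions of $2g-m<g<i$, whose parts are all at most $i-1$ and hence exact, with one caveat. When $2g-m\ge i$ (possible only for small $n$), parts $k\ge i$ appear, and the same single-uncertain-factor argument applies with the extra factor $q^{-n}$, where $n\ge1$. The resulting bound is of size
\[
q^{-n}(2g)^{2g-m-k}q^{-(2g-m-k)/2}B_k.
\]
Relative to $B_m$ this is $q^{-n-(2g-m-k)/2+(m-k)/2}D^{k-m}$. Since the exponent of $q$ equals $-g+k$, which is $\le0$, we get at most $(2g)^{\cdot}D^{k-m}$, again small. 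Adding the two halves gives $\Delta s_m\le B_m$. The final inequality should need only $D\ge 16g^3p(2g)$, with large $q$ used only to absorb the cross terms via $q>Q(g)$.

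The main obstacle is the bookkeeping of exponents. We must check that in every monomial the power of $q$ lost by replacing $s_k$ with $B_k$ is exactly compensated, so that the ratio to $B_m$ involves only powers of $D^{-1}$ and no positive powers of $q$. We also need to handle the cross terms when two uncertain factors could occur, namely in the first sum when $2g-m\ge 2i$. That is impossible, since $2g-m<g<2i$, so at most one uncertain factor occurs per monomial. I would first verify this exponent identity carefully and then fix the constant.
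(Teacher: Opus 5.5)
Your overall route matches the paper's. You isolate $s_m$ via \cref{lem:formula}, note that only factors $s_k$ with $k\ge i$ are uncertain, and bound each monomial's error by the Weil bound on the exact cofactors times $B_k$. You observe that the powers of $q$ cancel exactly against $B_m$, leaving only $(2g/D)^{m-k}$, and you count partitions crudely by $p(2g)$. Your per-partition ratio bookkeeping is slightly cleaner than the paper's ``number of terms times largest term'' bound over $k_0$. You are also right that for $2i-1=g$ the bound $\Delta s_g\le B_g$ needed in the case $m=2i$ is not among hypotheses (2). The paper tacitly supplies it from \cref{lem:base} in the proof of the main theorem, exactly as you do.

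There is, however, a false step in the case $m=2i$. Your key observation $k\ge i>m-k$ fails for $k=i$, since then $m-k=i$. The partition $m_i=2$ of $2i$ lies in the second sum of \eqref{eq:sm-formula} and gives the monomial $s_i^2/(2i^2)$. This monomial has \emph{two} uncertain factors: its cofactor of weight $m-k=i$ is $s_i$ itself, not an exact quantity. So your conclusion that at most one uncertain factor occurs per monomial is wrong; you only rechecked it for the first sum. This is precisely the ``one new feature'' the paper treats separately in the even case.

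The repair fits your framework. By the error-propagation rule this term contributes to $\Delta s_{2i}$ at most
\[
\frac{2i}{2i^2}\bigl(2|s_i|B_i+B_i^2\bigr)\le 6g\,q^{-i/2}B_i=\frac{6g}{D^{i}}B_{2i}.
\]
This uses $|s_i|\le 2g\,q^{-i/2}$ and $B_i\le g\,q^{-i/2}$. The latter holds for every $q$ because $i\le g$, so no largeness of $q$ is needed for cross terms; the lemma carries no hypothesis on $q$. Your stated bound $2g\,p(2g)(2g/D)B_m=B_m/(4g)$ for the single-uncertain-factor terms leaves ample room to absorb it.

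Two minor slips:
\begin{itemize}
\item In the first sum, the exponent of $q$ in the ratio to $B_m$ is exactly $0$ (because $n=m-g$), not $k-g$. This is harmless, since you only need it to be $\le 0$.
\item ``$2g-m<g<i$'' should read $2g-m<g<2i$. It is this inequality that rules out two large parts in the first sum.
\end{itemize}
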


\begin{proof}
We treat $m = 2i-1$ in full and then indicate the change for $m = 2i$. In both cases
$m > g$ (for $m = 2i-1$ this is the standing assumption of the lemma, and for $m = 2i$ it
holds since $i > g/2$), so we may set $n \defeq m - g > 0$ and apply \cref{lem:formula}.
In \eqref{eq:sm-formula} the factors $s_k$ with $k \leq i-1$ are
exact, so only partitions containing some index $k_0 \geq i$ contribute to the
error.

\emph{At most one large index.} If two factors had indices $\geq i$, their
contribution to $\sum_k k m_k$ would be $\geq 2i > 2i-1 = m$ for the second
(main) sum, and $\geq 2i > g \geq 2g-m$ for the first sum; both are impossible
for $m = 2i-1$. Hence at most one index $k_0 \geq i$ occurs, necessarily with
$m_{k_0}=1$. Applying the error-propagation rule from the end of \cref{sec:prelim} to the
products and bounding the remaining (exact)
factors by \eqref{eq:bound s},
\begin{align*}
    \Delta s_{2i-1}
    \leq (2i-1)\Biggl[&q^{-n}\!\!\sum_{k_0=i}^{2g-2i+1}\frac{\Delta s_{k_0}}{k_0}\!\!\sum_{\substack{\sum k m_k = 2g-2i+1-k_0}}\prod_{k}\frac{|s_k|^{m_k}}{m_k!\,k^{m_k}}\\
    &+\sum_{k_0=i}^{2i-2}\frac{\Delta s_{k_0}}{k_0}\sum_{\substack{\sum k m_k = 2i-1-k_0}}\prod_{k}\frac{|s_k|^{m_k}}{m_k!\,k^{m_k}}\Biggr].
\end{align*}
Ignoring the denominators $k_0$ and $m_k!\,k^{m_k}$ (all $\geq1$), using
$|s_k|\leq 2g\,q^{-k/2}$ and $\sum_k m_k \leq \sum_k k m_k$, and bounding the
number of partitions of $M$ by $p(M) \leq p(2g)$, each inner sum over partitions
of $M$ is at most $p(2g)\,(2g\,q^{-1/2})^{M}$. Therefore
\begin{equation}\label{eq:ind-bracket}
\begin{aligned}
    \Delta s_{2i-1}
    \leq (2i-1)p(2g)\Biggl[&q^{-n}\sum_{k_0=i}^{2g-2i+1}\Delta s_{k_0}\,(2g\,q^{-1/2})^{2g-2i+1-k_0}\\
    &+\sum_{k_0=i}^{2i-2}\Delta s_{k_0}\,(2g\,q^{-1/2})^{2i-1-k_0}\Biggr].
\end{aligned}
\end{equation}
Since $2g-2i+1 + 2n = 2i-1$, one checks
\[
    q^{-n}(2g\,q^{-1/2})^{2g-2i+1-k_0} = (2g)^{2g-2i+1-k_0}q^{-(2i-1-k_0)/2},
\]
which
is term-by-term $\leq (2g\,q^{-1/2})^{2i-1-k_0}$ because $2g-2i+1 = 2g-(2i-1) < g < 2i-1$. Moreover the index range $i \leq k_0 \leq 2g-2i+1$
of the first sum is contained in that of the second. Hence the bracket is at most twice
the second sum, and using $2(2i-1) \leq 4g$,
\[
    \Delta s_{2i-1} \leq 4g\,p(2g)\sum_{k_0=i}^{2i-2}\Delta s_{k_0}\,(2g\,q^{-1/2})^{2i-1-k_0}.
\]
Now insert the induction hypothesis $\Delta s_{k_0} \leq B_{k_0} = g\,q^{-k_0/2-g/4}D^{k_0-g}$.
The powers of $q$ combine to $q^{-k_0/2-g/4}\cdot q^{-(2i-1-k_0)/2} = q^{-(2i-1)/2-g/4}$,
independently of $k_0$, so
\begin{equation}\label{eq:ind-geom}
    \Delta s_{2i-1}
    \leq 4g^2 p(2g)\,q^{-(2i-1)/2-g/4}\sum_{k_0=i}^{2i-2}D^{\,k_0-g}(2g)^{2i-1-k_0}.
\end{equation}
Writing the summand as $D^{-g}(2g)^{2i-1}(D/2g)^{k_0}$ and using $D \geq 2g$, the
sum is increasing in $k_0$, so it is at most
$(i-1)\,D^{2i-2-g}(2g)^{2i-1-(2i-2)} = (i-1)(2g)\,D^{2i-2-g} \leq 2g^2 D^{2i-2-g}$.
Hence
\[
    \Delta s_{2i-1} \leq 8g^4 p(2g)\,q^{-(2i-1)/2-g/4}D^{2i-2-g}
    = \frac{8g^3 p(2g)}{D}\,B_{2i-1} \leq \tfrac12\,B_{2i-1},
\]
since $D = 16g^3 p(2g)$. This proves the bound for $m = 2i-1$ with a factor of
$2$ to spare.

\emph{The case $m = 2i$.} The same computation applies, with $2g-2i$ in place of
$2g-2i+1$ and $2i$ in place of $2i-1$, and yields a contribution
$\leq \tfrac12 B_{2i}$ from the partitions with a single large index, by the
identical chain of inequalities (the $q$-exponent is now $q^{-i-g/4}$). The one
new feature is that $\sum_k k m_k = 2i$ admits a partition with \emph{two} large
indices, namely $m_i = 2$. Its contribution to $e_{2i}$ is
$\frac{(-s_i)^2}{2!\,i^2}$, with error at most
$\frac{1}{i^2}\bigl(|s_i|\,\Delta s_i + \tfrac12(\Delta s_i)^2\bigr)$; multiplied
by $2i$ and bounded via $|s_i|\leq 2g\,q^{-i/2}$ and $\Delta s_i \leq B_i$ this is
at most
\[
    \frac{2}{i}\Bigl(2g\,q^{-i/2}B_i + \tfrac12 B_i^2\Bigr)
    \leq 4g\,q^{-i/2}B_i + B_i^2 \leq 6g\,q^{-i/2}B_i,
\]
where the final step uses $B_i \leq 2g\,q^{-i/2}$. This holds directly from the
definition of $B_i$: since $i \leq g$ we have $q^{-g/4} \leq 1$ and $D^{i-g} \leq 1$, so
$B_i = g\,q^{-i/2-g/4}D^{i-g} \leq g\,q^{-i/2} \leq 2g\,q^{-i/2}$. Now
$q^{-i/2}B_i = g\,q^{-i-g/4}D^{i-g}$, so this contribution is
$\leq 6g^2 q^{-i-g/4}D^{i-g} = \frac{6g}{D^{i}}B_{2i} \leq \tfrac12 B_{2i}$
because $D \geq 16g$. Adding the two contributions gives $\Delta s_{2i}\leq B_{2i}$.
\end{proof}

\begin{lemma}[Base case]
\label[lemma]{lem:base}
Suppose $q \geq D^{2g}$. Then for every integer $j$ with $g/2 < j \leq g$ the seed
estimate satisfies $4g\,q^{-j} \leq B_j$.
\end{lemma}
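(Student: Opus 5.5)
We need $4g\,q^{-j} \le g\,q^{-j/2-g/4}D^{j-g}$, i.e. $4 \le q^{j/2-g/4}D^{j-g}$. The plan is to reduce the statement to this single scalar inequality by dividing both sides by $g\,q^{-j/2-g/4}D^{j-g}$, which is positive. The exponent $j/2-g/4 = (2j-g)/4$ is strictly positive for $j>g/2$. The factor $D^{j-g}$ is at most $1$ and can be as small as $D^{-g/2}$, since $j-g \ge -g/2$. So the task is to show that the gain from $q^{(2j-g)/4}$ beats the loss $D^{j-g}$, together with the constant $4$.

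Next I will substitute the hypothesis $q\ge D^{2g}$, which gives $q^{(2j-g)/4} \ge D^{g(2j-g)/2}$. It then suffices to check
\[
D^{g(2j-g)/2 + j - g} \ge 4 .
\]
Set $t \defeq 2j-g$. Since $j$ is an integer with $j>g/2$, we have $t\ge 1$. The exponent becomes
\[
\frac{gt}{2} + \frac{t-g}{2} = \frac{(g+1)t - g}{2} \ge \frac12 ,
\]
using $t\ge1$. So the left side is at least $D^{1/2}$.

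Finally, $D = 16g^3p(2g) \ge 16$, so $D^{1/2}\ge 4$, and the inequality holds.

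The only delicate point is the boundary case $j$ close to $g/2$, where $q^{(2j-g)/4}$ is barely larger than $1$ while $D^{j-g}$ is smallest. Integrality of $j$ is what forces $t\ge1$, and hence an exponent of at least $1/2$. This is where the choice $q\ge D^{2g}$ (rather than a smaller power) is used, so I would double-check the arithmetic of the exponent there. Nothing else beyond monotonicity of $x\mapsto D^x$ for $D\ge1$ is needed.
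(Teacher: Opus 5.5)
Your proof is correct and is essentially the paper's argument: the paper takes $\log_D$ and checks $\tfrac{t}{4}\log_D q \geq \log_D 4 + \tfrac{g-t}{2}$ using $\log_D q \geq 2g$ and $\log_D 4 \leq \tfrac12$. This is exactly your exponent inequality $\tfrac{(g+1)t-g}{2} \geq \tfrac12$ combined with $D^{1/2} \geq 4$, written additively; both reduce to $g(t-1)+t \geq 1$ for the integer $t = 2j-g \geq 1$.
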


\begin{proof}
Note $D = 16g^3p(2g) \geq 16$, so $\log_D 4 \leq \tfrac12$. The asserted
inequality $4g\,q^{-j} \leq g\,q^{-j/2-g/4}D^{\,j-g}$ is equivalent to
$4\,D^{\,g-j} \leq q^{(2j-g)/4}$. Put $Q \defeq \log_D q \geq 2g$ and
$t \defeq 2j - g$; since $j$ is an integer with $g/2 < j \leq g$ we have
$1 \leq t \leq g$ and $g - j = (g-t)/2$. Taking $\log_D$, the inequality reads
\[
    \tfrac{t}{4}\,Q \;\geq\; \log_D 4 + \tfrac{g-t}{2}.
\]
Using $Q \geq 2g$, the left side is $\geq \tfrac{t}{4}\cdot 2g = \tfrac{gt}{2}$, so
it suffices that $\tfrac{gt}{2} \geq \tfrac12 + \tfrac{g-t}{2}$, i.e.
$g(t-1) + t \geq 1$, which holds for all $t \geq 1$. Hence
$4\,D^{g-j} \leq q^{(2j-g)/4}$, as required.
\end{proof}

\section{Proof of the main theorem}\label{sec:proof}

\begin{proof}[Proof of \cref{thm:main}]
As noted after the statement we may assume $g \geq 2$, the case $g = 1$ being
classical. Take $Q(g) = D^{2g+2} = \bigl(16\,g^3 p(2g)\bigr)^{2g+2}$ and assume
$q > Q(g)$. In
particular $q > 256\,g^2$, so \cref{lem:small-i} applies, and $q > D^{2g}$, so
\cref{lem:base} applies. We show by induction on
$i$ that $s_i$ is determined by $c_1,\dots,c_g$ for all $1 \leq i \leq g$; this
gives $s_1,\dots,s_g$, hence by \eqref{eq:complement of e} and
\cref{lem:sym-identities} all of $e_1,\dots,e_{2g}$, i.e. $f_A$.

\emph{Range $i \leq g/2$.} \cref{lem:small-i} determines each such $s_i$ exactly.

\emph{Range $g/2 < i \leq g$.} We process these $i$ in increasing order. Fix such
an $i$ and assume inductively that $s_j$ is known exactly for all $j \leq i-1$
(true: for $j \leq g/2$ by \cref{lem:small-i}, and for $g/2 < j < i$ by previous
steps of this induction). We also have $\Delta s_j \leq B_j$ for $i \leq j \leq 2i-2$:
for $g/2 < j \leq g$ this is the seed estimate \eqref{eq:seed} together with
\cref{lem:base}, and for $g < j \leq 2i-2$ it was produced by the step with index
$\lceil j/2\rceil < i$ via \cref{lem:induction-step}. Thus the hypotheses of
\cref{lem:induction-step} hold, and we obtain $\Delta s_{2i} \leq B_{2i}$.

Now recover $s_i$. By \cref{lem:counts-to-sums},
$s_i = L_i - \tfrac12 s_{2i} - \sum_{j\geq3} s_{ji}/j$, where $L_i$ is known
exactly (as $i \leq g$) and $s_{2i}$ is known with error $\leq B_{2i}$.
Estimating the tail by \eqref{eq:bound s},
$\bigl|\sum_{j\geq3}s_{ji}/j\bigr| \leq \tfrac13\sum_{j\geq3}2g\,q^{-ji/2}
\leq 2g\,q^{-3i/2}/(1-q^{-i/2}) \leq 4g\,q^{-3i/2}$,
the total error in the resulting approximation $\wt s_i \defeq L_i - \tfrac12 s_{2i}$
(with $s_{2i}$ the known approximation) is
\[
    \Delta s_i \leq \tfrac12 B_{2i} + 4g\,q^{-3i/2}.
\]
For the first term, $\tfrac12 B_{2i} = \tfrac12 g\,q^{-i-g/4}D^{2i-g} < \tfrac14 q^{-i}$
is equivalent to $2g\,D^{2i-g} < q^{g/4}$; since $2i-g \leq g$ this follows from
$2g\,D^{g} < q^{g/4}$, i.e. $q > (2g)^{4/g}D^4$. As $D \geq 2g$ we have
$(2g)^{4/g}D^4 \leq D^{4/g}D^4 \leq D^6 \leq D^{2g+2}$ for $g \geq 2$, so this
holds. For the second term, $4g\,q^{-3i/2} < \tfrac14 q^{-i}$ is $q^{i/2} > 16g$,
which holds since $i \geq 1$ and $q > 256\,g^2$. Hence $\Delta s_i < \tfrac12 q^{-i}$,
and \cref{lem:integrality} recovers $s_i$ exactly.

This completes the induction, and with it the proof.
\end{proof}

The proof is constructive: it yields an algorithm that, on input $q$, $g$, and
the point counts $c_1, \dots, c_g$, returns the $L$\nobreakdash-polynomial $f_A$
(equivalently $s_1, \dots, s_g$) by the small- and large-range recoveries above,
the only non-algebraic operations being the evaluation of the logarithms
$L_m = \log(q^{gm}/c_m)$ to sufficient precision and the final roundings. A
\textsc{Sage} implementation is available in the accompanying script
\texttt{recover\_zeta.sage}. It performs $O(g^3)$ arithmetic operations on
numbers of $\Theta(g\log q)$ bits, for a bit complexity of
$\widetilde{O}(g^4\log q)$ with FFT-based multiplication; in particular, for
fixed $g$ it is quasi-linear in $\log q$, hence polynomial in the input size.

\section{Optimality and remarks}\label{sec:remarks}

How many of the point counts are really needed? Write $N(g)$ for the least $N$
such that, for all sufficiently large $q$, the point counts
\[
    \#A(\F_{q^i}), \qquad 1 \leq i \leq N,
\]
determine the isogeny class of \emph{every} abelian variety $A$ of dimension $g$
over $\F_q$. \cref{thm:main} says $N(g) \leq g$. One might expect equality, the
``$g$ unknowns, so $g$ measurements'' heuristic. It is, however, \emph{false}: $g$ is optimal for
$g=2$ and $g=4$, but for $g=3$ already two point counts suffice.

We first record some basic facts. The first is
\emph{monotonicity}: $N(g) \leq N(g+1)$, so a lower bound in one dimension
propagates to all higher ones. Indeed, if two non-isogenous $g$-dimensional abelian varieties $A$ and $B$ over
$\F_q$ share their first $N$ point counts, then for any elliptic curve $E/\F_q$ the
products $A \times E$ and $B \times E$
still share their first $N$ counts (point counts are multiplicative on products) and
they remain non-isogenous.

The mechanism underlying the discussion in this section is that a point count is a product of values of
$f_A$ at certain roots of unity:
\begin{equation}\label{eq:cn-as-eval}
    c_n = \#A(\F_{q^n}) = \prod_{i=1}^{2g}(1-\alpha_i^n) = \prod_{\zeta^n = 1} f_A(\zeta).
\end{equation}
For $n \leq 2$ these roots are $\pm 1$ and the resulting
conditions are \emph{linear} in the coefficients, which is what makes the fibres of the map $f_A \mapsto (c_1, c_2, \ldots, c_N)$ amenable to a direct lattice analysis. For $n > 2$, the resulting equations have degree $\phi(n) > 1$ making them hard to handle.

Throughout we use that the free coefficients are $(a_1,\dots,a_g)$, the remaining
$a_k$ being fixed by $a_0=1$ and the functional equation $a_{2g-k}=q^{g-k}a_k$,
and that the $q$-Weil polynomials form a subset of the integer lattice points inside the Weil region in $\Z^g$,
the body cut out by the Riemann hypothesis, whose $a_k$-extent is
$|a_k| \leq \binom{2g}{k}q^{k/2}$.

We also use the \emph{trace polynomial}. By the functional equation the reciprocal roots
of $f_A$ pair as $\alpha,\,q/\alpha$, so the characteristic polynomial
$P(T) = T^{2g}f_A(1/T) = \prod_{i=1}^{2g}(T-\alpha_i)$ factors over $\Qbar$ as
$P(T) = \prod_{i=1}^{g}(T^2 - \beta_i T + q)$ with $\beta_i = \alpha_i + q/\alpha_i$, and
$h(T) \defeq \prod_{i=1}^{g}(T-\beta_i)$ has integer coefficients. Since $T^2-\beta T+q$
has both roots of modulus $\sqrt q$ exactly when $\beta \in [-2\sqrt q, 2\sqrt q]$, the
polynomial $P$ is $q$-Weil if and only if all $\beta_i$ lie in $[-2\sqrt q, 2\sqrt q]$.
We also get the relation $P(T) = T^g h(T+\frac{q}{T})$.
Finally $\#A(\F_q) = P(1) = \prod_{i=1}^g\bigl((1+q)-\beta_i\bigr) = h(1+q)$, so a
coincidence $\#A(\F_q) = \#B(\F_q)$ of first point counts is the single equation
$h_A(1+q) = h_B(1+q)$.

\begin{lemma}\label[lemma]{lem:one-count}
For every $g \geq 2$ a single point count never suffices: $N(g) > 1$.
In particular, $N(2) = 2$.
\end{lemma}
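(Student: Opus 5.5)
By the monotonicity remark ($N(g) \leq N(g+1)$, via products with an elliptic curve), it suffices to show $N(2) > 1$. The equality $N(2) = 2$ then follows from \cref{thm:main}, which gives $N(2) \leq 2$. So for every sufficiently large $q$ I want two non-isogenous abelian surfaces $A,B$ over $\F_q$ with $\#A(\F_q) = \#B(\F_q)$. I would work with trace polynomials. A monic integral $h(T) = T^2 + uT + v$ with both real roots in $[-2\sqrt q, 2\sqrt q]$ gives the $q$-Weil polynomial $P(T) = T^2 h(T + q/T)$. By the discussion above, $\#A(\F_q) = h(1+q)$, so the single condition to meet is $h_A(1+q) = h_B(1+q)$ with $h_A \neq h_B$.

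Write $x = q+1$. The natural move is to trade one unit of $u$ against $x$ units of $v$, since $x^2 + ux + v = x^2 + (u+1)x + (v - x)$. Concretely I would take
\[
    h_A(T) = T^2 - (q+1), \qquad h_B(T) = T^2 + T - 2(q+1).
\]
Both take the value $x^2 - x$ at $x = q+1$. The roots of $h_A$ are $\pm\sqrt{q+1}$. The roots of $h_B$ are $\bigl(-1 \pm \sqrt{8q+9}\bigr)/2$, of absolute value about $\sqrt2\,\sqrt q$. For $q$ large all of these lie in $[-2\sqrt q, 2\sqrt q]$, so both $P_A$ and $P_B$ are $q$-Weil polynomials. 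Because $h$ determines $P$, and $P$ determines the isogeny class by Tate, $h_A \neq h_B$ forces $A$ and $B$ to be non-isogenous once both exist.

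The remaining step, which I expect to be the main obstacle, is realizability: each $P$ must actually be the characteristic polynomial of Frobenius of some abelian surface. I would use Honda--Tate in the form that every \emph{ordinary} $q$-Weil polynomial occurs, with no local-invariant obstruction. Here ordinary means the middle coefficient of $P$ is prime to $p$. Expanding $T^2 h(T + q/T)$ shows that this middle coefficient is $v + 2q$. For $h_A$ it equals $q - 1$, which is always prime to $p$. For $h_B$ it equals $-2$, which is prime to $p$ when $p$ is odd.

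For $p = 2$ I would rerun the same trick with shifted parameters: take $h_A = T^2 + uT + v$ and $h_B = T^2 + (u+1)T + (v - q - 1)$. I would then choose small integers $u$ and $v \approx c\,q$ so that both root pairs stay in the Weil interval and both $v + 2q$ and $v - q - 1 + 2q$ are odd. The latter amounts to $v$ odd and $v + q + 1$ odd, which is consistent since $q$ is even; for example $u = 0$ and $v = -(q+1) + 2$. I would check the root conditions as before, as the leeway from the factor $\sqrt2 < 2$ leaves room for such perturbations. Alternatively, if irreducibility or ordinarity became awkward in some case, I would fall back on products of elliptic curves. There Waterhouse's theorem realizes every trace prime to $p$, and equal point counts $N_1N_2 = N_3N_4$ can be arranged with orders $m^2$, $(m+1)^2$, $m(m+1)$, with care taken that no $N_i \equiv 1 \pmod p$.
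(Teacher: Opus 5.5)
Your argument for odd $q$ is correct, already proves the lemma, and is essentially the paper's proof. Both use monotonicity, an explicit pair of trace polynomials with $h_A(1+q)=h_B(1+q)$ that are $q$-Weil and ordinary for odd $q$, and \cref{thm:main} for $N(2)\leq 2$; your pair $T^2-(q+1)$, $T^2+T-2(q+1)$ (valid for odd $q\geq 3$) just replaces the paper's $T^2-(q+2)$, $T^2-T-1$.

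The $p=2$ paragraph is unnecessary. $N(2)>1$ only requires collisions for arbitrarily large $q$, not for every large $q$, and odd prime powers supply these.

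As written, that paragraph is also false. For even $q$ the middle coefficients $v+2q$ and $v+q-1$ differ by the odd number $q+1$, so they can never both be odd. Your example $v=1-q$ gives $P_B=T^4+T^3+qT+q^2$, whose middle coefficient is $0$. The elliptic-curve fallback fails for the same parity reason, since one of $m^2,(m+1)^2$ is odd and the corresponding trace is even. If you did want $q=2^k$, you would have to shift $u$ by $2$. For example, $T^2-T-1$ and $T^2+T-(2q+3)$ are ordinary $q$-Weil with equal first count for $q=2^k\geq 4$.
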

\begin{proof}[Proof of \cref{lem:one-count}]
Due to monotonicity of $N(g)$, it is enough to show $N(2) > 1$.

We argue by an explicit family, valid
for every odd prime power $q$. In the trace-polynomial coordinates above take
\[
    h_A(T) = T^2 - (q+2), \qquad h_B(T) = T^2 - T - 1,
\]
whose roots, $\pm\sqrt{q+2}$ (with $q+2 \leq 4q$) and $\tfrac{1\pm\sqrt5}{2}$ (of modulus
$< 2 \leq 2\sqrt q$), all lie in $[-2\sqrt q, 2\sqrt q]$; the corresponding
\[
    P_A(T) = T^4 + (q-2)T^2 + q^2, \qquad P_B(T) = T^4 - T^3 + (2q-1)T^2 - qT + q^2
\]
are therefore $q$-Weil, and ordinary (their middle coefficients $q-2$ and $2q-1$ are prime
to $p$ for odd $q$). They are distinct, hence non-isogenous, yet
$\#A(\F_q) = h_A(1+q) = h_B(1+q) = q^2+q-1$. Thus $N(2) > 1$. Since $N(2) \leq 2$ by \cref{thm:main}, we get that $N(2) = 2$.
And indeed, the second count separates the two families above: $c_2(A)-c_2(B) = -2(q+1)(q^2+q-1) \neq 0$. These claims are checked symbolically in \texttt{verify\_g2\_family.sage}, in \cite{Repo}.
\end{proof}

\begin{remark}
    For $g \geq 3$, one can also give a direct proof of \cref{lem:one-count} based on asymptotic counts. A single point count takes few values: by the Weil bounds
    $\#A(\F_q) = \prod_i(1-\alpha_i)$ lies in the interval
    $[(\sqrt q-1)^{2g},(\sqrt q+1)^{2g}]$, which contains only
    \[
        (\sqrt q+1)^{2g}-(\sqrt q-1)^{2g} = 4g\,q^{(2g-1)/2}\bigl(1+O(q^{-1})\bigr)
    \]
    integers. The isogeny classes it must separate are far more numerous: by
    \cite[Theorem~1.1]{DiPippoHowe1998} the number of isogeny classes of $g$-dimensional
    abelian varieties over $\F_q$ is
    \[
        \#\mathcal I(q,g) \sim v_g\,\frac{\varphi(q)}{q}\,q^{g(g+1)/4}, \qquad
        v_g = \frac{2^g}{g!}\prod_{j=1}^{g}\Bigl(\frac{2j}{2j-1}\Bigr)^{g+1-j},
    \]
    as $q\to\infty$ over prime powers. For $g \geq 3$ the exponent $g(g+1)/4$ exceeds
    $(2g-1)/2$ by $(g-1)(g-2)/4 > 0$, and since $\varphi(q)/q > \tfrac{1}{2}$, the ratio $\#\mathcal I(q,g)\big/\bigl(4g\,q^{(2g-1)/2}\bigr)$ tends to
    infinity as $q \rightarrow \infty$. Hence for $q$ large there are more isogeny classes than available values of
    $\#A(\F_q)$, and so there must exist two distinct isogeny classes which share their first point count; so $N(g) > 1$.

    For $g=2$ the two exponents coincide ($g(g+1)/4 = (2g-1)/2 = \tfrac32$) and the factor
    $\varphi(q)/q$ makes this comparison inconclusive (for $q=2^k$ it gives
    $v_2\,\varphi(q)/q = \tfrac{16}{3} < 8$).
\end{remark}

\begin{example}
    For $g=3$, the lower bound $N(g) > 1$ of \cref{lem:one-count} too is witnessed by an
    explicit family, valid for every odd prime power $q$ at once. In trace-polynomial
    coordinates take
    \[
        h_A(T) = T^3 - 3qT - 1, \qquad h_B(T) = T^3 - (3q-1)T - (q+2);
    \]
    for $q \geq 2$ the values of each at $-2\sqrt q,\,-\sqrt q,\,\sqrt q,\,2\sqrt q$ alternate
    in sign as $-,+,-,+$, placing one root in each subinterval of $[-2\sqrt q, 2\sqrt q]$, so
    the corresponding
    \[
        P_A(T) = T^6 - T^3 + q^3, \qquad P_B(T) = T^6 + T^4 - (q+2)T^3 + qT^2 + q^3
    \]
    are $q$-Weil, and ordinary for odd $q$ (their middle coefficients are $-1$ and $-(q+2)$).
    They are distinct, hence non-isogenous, yet both have $\#A(\F_q) = h(1+q) = q^3$; the
    second count separates them, $\#A(\F_{q^2}) = q^6+2q^3$ for $A$ against $q^6+2q^4+4q^3$ for
    $B$, in accordance with \cref{prop:N3}. For instance $q=5$ gives the abelian threefold isogeny classes with LMFDB labels \href{https://beta.lmfdb.org/Variety/Abelian/Fq/3/5/a_a_ab}{3.5.a\_a\_ab} and \href{https://beta.lmfdb.org/Variety/Abelian/Fq/3/5/a_b_ah}{3.5.a\_b\_ah}, and corresponding Weil polynomials $T^6 - T^3 + 125$ and $T^6 + T^4 - 7T^3 + 5T^2 + 125$, both with $\#A(\F_5) = 125$.
    These claims are checked symbolically in \texttt{verify\_g3\_family.sage} in \cite{Repo}.
\end{example}

As anticipated in the introduction (see also \cref{rem:bitcount}), the
``$g$ unknowns, so $g$ measurements'' heuristic already breaks down at $g = 3$.

\begin{proposition}[$g = 3$]\label[proposition]{prop:N3}
$N(3) = 2$: for abelian threefolds the two point counts $\#A(\F_q)$ and
$\#A(\F_{q^2})$ already determine the isogeny class for every prime power
$q \geq 16$. In particular $g$ point counts are \emph{not} optimal in general.
\end{proposition}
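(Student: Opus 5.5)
The lower bound $N(3) > 1$ is \cref{lem:one-count}, via the monotonicity of $N$, and is also witnessed by the explicit family in the preceding example. It therefore remains to prove the upper bound: for $q \geq 16$, the counts $c_1, c_2$ determine the isogeny class of every abelian threefold. I would work entirely in trace-polynomial coordinates. Write $h(T) = T^3 + b_1T^2 + b_2T + b_3 \in \Z[T]$, with all roots $\beta_i \in I \defeq [-2\sqrt q, 2\sqrt q]$, and put $x \defeq q+1$.

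\emph{Step 1: the counts are linear in the $b_k$.} By \eqref{eq:cn-as-eval} with $n \le 2$,
\[
    c_1 = P(1) = \prod_{i=1}^{3}(x - \beta_i) = h(x), \qquad
    c_2 = P(1)P(-1) = h(x)\cdot\bigl(-h(-x)\bigr),
\]
since $P(-1) = \prod_i (x + \beta_i) = -h(-x)$ when $g = 3$. Since $c_1 \neq 0$, the pair $(c_1, c_2)$ determines both $h(x)$ and $h(-x)$. Taking the difference and the sum gives
\[
    h(x) - h(-x) = 2x^3 + 2b_2x, \qquad h(x) + h(-x) = 2(b_1x^2 + b_3).
\]
Hence $b_2$ is determined outright, together with the single linear combination $K = b_1x^2 + b_3$. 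This is the gain over the heuristic: two counts fix two of the three unknowns and a nondegenerate linear relation on the third.

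\emph{Step 2: two candidates force a large separation.} Suppose $h_A \ne h_B$ share $c_1$ and $c_2$, and put $\delta = b_1^A - b_1^B$. By Step 1, $\delta \in \Z\setminus\{0\}$, and we may assume $\delta > 0$. Then
\[
    h_A(T) - h_B(T) = \delta\,(T^2 - x^2).
\]
On $I$ we have $T^2 \le 4q < x^2$, so on all of $I$
\[
    h_B(T) - h_A(T) \ge \delta\bigl(x^2 - 4q\bigr) = \delta(q-1)^2 .
\]
Now evaluate at the endpoints of $I$. Since each cubic is monic with all roots in $I$, we have $h(2\sqrt q) \ge 0$ and $h(-2\sqrt q) \le 0$. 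At $T = 2\sqrt q$ this gives $h_B(2\sqrt q) \ge (q-1)^2$. At $T = -2\sqrt q$ it gives $|h_A(-2\sqrt q)| \ge (q-1)^2$. On the other hand, $|h(\pm 2\sqrt q)| = \prod_i |2\sqrt q \mp \beta_i| \le (4\sqrt q)^3 = 64q^{3/2}$. So $(q-1)^2 \le 64 q^{3/2}$, which fails once $q$ is somewhat above $4096$. This proves the claim for all large $q$.

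\emph{Step 3: sharpening down to $q \ge 16$ (the main obstacle).} The crude endpoint bound loses too much. To sharpen it, I would use all the information simultaneously: the common $b_2$, the relation $b_3 = K - b_1x^2$, and both endpoint inequalities for both polynomials. First, at a root $\beta$ of $h_A$ we get $h_B(\beta) \ge \delta(q-1)^2$, so each root of $h_A$ lies to the left of the corresponding root of $h_B$ by a definite amount. Second, the two middle coefficients agree, $\sum_{i<j}\beta_i^A\beta_j^A = \sum_{i<j}\beta_i^B\beta_j^B$. Third, AM--GM bounds $\prod_i(2\sqrt q - \beta_i)$ in terms of $b_1$. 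Combining these should yield a bound of shape $(q-1)^2 \le C q^{3/2}$ with a much smaller $C$, or a contradiction directly, for $q$ beyond a moderate threshold. Whatever finite range $16 \le q < q_0$ remains would be handled by exhaustive enumeration of the lattice points $(b_1, b_2, b_3)$ in the Weil region. This is feasible because, by Step 1, one only has to check for each $(b_2, K)$ whether two integers $b_1$ admit Weil cubics. I expect this check to be done by the accompanying \textsc{Sage} script, and I expect it to reveal collisions for $q < 16$, which would explain the threshold. Getting the analytic bound low enough that this residual computation is small is where I expect the real work to lie.
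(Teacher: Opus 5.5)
Your Steps 1 and 2 are correct and follow the paper's route. The paper also reads $(c_1,c_2)$ as the two linear functionals $f_A(1)$, $f_A(-1)$. It notes that on the resulting fibre $b_2$ is fixed while $b_1$ moves in unit steps and $b_3$ in steps of $(q+1)^2$ (your invariant $K=b_1x^2+b_3$), and shows that the Weil region cannot contain two lattice points of this line. Since $N(g)$ only concerns sufficiently large $q$, your Step 2 together with \cref{lem:one-count} already gives $N(3)=2$.

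The gap is in the explicit claim for every prime power $q\geq16$. Step 3 is a hope rather than an argument (\emph{should yield}), and the enumeration is left undone, so nothing in the proposal covers $16\leq q\lesssim 4100$. The sharpening you expect to be the real work is a one-liner in the paper. Each candidate satisfies $|b_3|=\bigl|\prod_i\beta_i\bigr|\leq(2\sqrt q)^3=8q^{3/2}$, while the two values of $b_3$ differ by $\delta(q+1)^2$. Hence $(q+1)^2\leq 16q^{3/2}$, which fails for $q\geq252$. The remaining range $16\leq q\leq251$ is settled by an exhaustive enumeration of the box of trace polynomials, with the boundary comparisons $u\sqrt q\lessgtr v$ re-decided in exact integer arithmetic. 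Collisions occur for every prime power $q\leq13$, so some such computation is genuinely needed to reach the threshold $16$.

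Your AM--GM idea, once carried out, actually beats the paper's bound. In your convention $\sum_i\beta_i=-b_1$, so AM--GM applied to the endpoint inequalities of Step 2 gives
\[
    \delta(q-1)^2\leq h_B(2\sqrt q)=\prod_i\bigl(2\sqrt q-\beta_i^B\bigr)\leq\Bigl(2\sqrt q+\tfrac{b_1^B}{3}\Bigr)^3
\]
and
\[
    \delta(q-1)^2\leq -h_A(-2\sqrt q)=\prod_i\bigl(2\sqrt q+\beta_i^A\bigr)\leq\Bigl(2\sqrt q-\tfrac{b_1^A}{3}\Bigr)^3 .
\]
Taking cube roots and adding gives $2\bigl(\delta(q-1)^2\bigr)^{1/3}\leq4\sqrt q-\delta/3$, hence $(q-1)^2<8q^{3/2}$. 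This fails for $q\geq68$, so the enumeration would only be needed for $16\leq q\leq67$.

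The interlacing claim in Step 3 is not right as stated, though you do not need it. Knowing $h_B>0$ at the roots of $h_A$ does not move all roots the same way: for a small upward shift of a monic cubic, the outer roots move left and the middle one moves right.
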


\begin{proof}[Proof of \cref{prop:N3}]
By \eqref{eq:cn-as-eval}, $c_2 = f_A(1)\,f_A(-1)$, so for $g=3$ knowing $(c_1,c_2)$
is the same as knowing the two integer linear functionals $f_A(1)=c_1$ and
$f_A(-1)=c_2/c_1$. Their common fibre is the rational line
\[
    \bigl\{\, (a_1,a_2,a_3) : f_A(1)=c_1,\ f_A(-1)=c_2/c_1 \,\bigr\},
\]
with primitive direction $v = (1,\,0,\,-(1+q^2))$ in the coordinates
$(a_1,a_2,a_3)$ (the common kernel of $\nabla f_A(\pm 1)$). We pass to the
trace-polynomial coordinates $(b_1,b_2,b_3)$, the coefficients of
$h(T)=T^3-b_1T^2+b_2T-b_3$.
Using the relation $P(T) = T^g h(T+\frac{q}{T})$ to compare coefficients with $P(T)=\sum_k a_kT^{6-k}$ gives the unimodular integral substitution
\[
    a_1=-b_1,\qquad a_2=b_2+3q,\qquad a_3=-b_3-2q\,b_1,
\]
so that $(b_1,b_2,b_3)$ ranges over the lattice $\Z^3$ as well, and $v$ becomes
$(-1,\,0,\,(1+q)^2)$. The integer points of the fibre form the arithmetic
progression obtained by adding multiples of $v$; consecutive ones differ by
$(1+q)^2$ in the coordinate $b_3$. Both must lie in the Weil region, and there the
\emph{product} bound $|b_3|=\bigl|\textstyle\prod_i\beta_i\bigr|\leq(2\sqrt q)^3=8\,q^{3/2}$
is much sharper than the symmetric-function bound
$|a_3|\leq\binom{6}{3}q^{3/2}=20\,q^{3/2}$, so two such points require only
$(1+q)^2\leq 16\,q^{3/2}$ instead of $1+q^2\leq 40\,q^{3/2}$. The former fails for
$q\geq 252$ (against $q\geq 1600$ for the latter), so for every prime power
$q\geq 252$ each fibre contains at most one Weil polynomial and $(c_1,c_2)$ is
injective on isogeny classes. The remaining prime powers $16\leq q\leq 251$ are
settled by the exhaustive enumeration of \cref{rem:ExhaustiveEnumeration};
Therefore the point counts $\#A(\F_q)$ and $\#A(\F_{q^2})$ determine the isogeny class of an abelian threefold for every prime power $q\geq 16$. Together with \cref{lem:one-count} this gives $N(3)=2$.
\end{proof}

The exhaustive enumeration of $q$-Weil polynomials that settles the range
$16 \leq q \leq 251$ also pins down the exact crossover: $(c_1,c_2)$ is non-injective
for every prime power $q \leq 13$ and injective for every prime power
$16 \leq q \leq 251$, so the analytic threshold $q \geq 252$ above and the enumerated
range meet with no gap.
For $16 \leq q \leq 25$, our enumeration also
agrees with the LMFDB \cite{lmfdb} tables, which are complete in that range for $g=3$. See
\cref{rem:ExhaustiveEnumeration} for more details on the enumeration and the exact-arithmetic
certification of the boundary cases.

\begin{remark}
    \label[remark]{rem:ExhaustiveEnumeration}
    A degree-$6$ $q$-Weil polynomial is determined by its real trace polynomial
    $h(T)=\prod_{i=1}^{3}(T-\beta_i)=T^3-b_1T^2+b_2T-b_3$, whose roots are the Frobenius
    traces $\beta_i\in[-2\sqrt q,2\sqrt q]$; the integer coefficients are therefore confined
    to the explicit finite box $|b_1|\leq 6\sqrt q$, $|b_2|\leq 12q$, $|b_3|\leq 8q^{3/2}$,
    and for each $(b_1,b_2)$ the admissible $b_3$ form the contiguous integer interval on
    which $h$ has all three roots real and lying in $[-2\sqrt q,2\sqrt q]$. Looping over this
    box lists every dimension-$3$ $q$-Weil polynomial exactly once. As this set is a
    \emph{superset} of the characteristic polynomials of abelian threefolds over $\F_q$,
    injectivity of $(c_1,c_2)$ on the box implies injectivity on isogeny classes;
    in particular the conclusion is unconditional for every prime power enumerated.
    The enumeration over the full range $16\leq q\leq 251$ is performed by the pure-Python
    box search \texttt{optimality\_search.py} (in \cite{Repo}), which locates the admissible $b_3$-interval
    endpoints in floating point with a small slack. To rule out floating-point error at the
    boundary of the Weil region, its \texttt{-{}-verify} mode re-decides each boundary
    comparison of the form $u\sqrt q\lessgtr v$ with $u,v\in\Z$ by squaring in exact
    integer arithmetic, and recovers the identical list of Weil polynomials for every prime
    power $16\leq q\leq 251$.
\end{remark}

For $g = 4$, we find that the upper bound $g$ from \cref{thm:main} really is optimal.

\begin{proposition}[$g = 4$]\label[proposition]{prop:N4}
$N(4) = 4$: for $g = 4$ the bound of \cref{thm:main} is
attained, so $g$ point counts are optimal.
\end{proposition}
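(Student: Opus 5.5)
Since \cref{thm:main} gives $N(4) \leq 4$, it suffices to show $N(4) > 3$. By the definition of $N(g)$, this means: for infinitely many prime powers $q$ (at least for arbitrarily large $q$), there should be two distinct $q$-Weil polynomials of degree $8$ that share $c_1, c_2, c_3$. The plan is the same as in \cref{lem:one-count} and the $g=3$ example: give an explicit \emph{$\Z[q]$-family} in trace-polynomial coordinates. Concretely, we look for two monic quartics $h_A, h_B \in \Z[q][T]$ with all roots real in $[-2\sqrt q, 2\sqrt q]$ for all large $q$ and $h_A \neq h_B$, such that $P_A(T) = T^4 h_A(T+q/T)$ and $P_B$ satisfy
\[
P_A(1)=P_B(1),\qquad P_A(-1)=P_B(-1),\qquad \prod_{\zeta^3=1,\ \zeta\neq1}P_A(\zeta)=\prod_{\zeta^3=1,\ \zeta\neq1}P_B(\zeta).
\]
By \eqref{eq:cn-as-eval}, these give $c_1,c_2,c_3$ equal.

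In trace coordinates these conditions become:
\begin{itemize}
\item $h_A(1+q)=h_B(1+q)$;
\item $h_A(-1-q)=h_B(-1-q)$;
\item a norm condition at $\zeta$ a primitive cube root of unity. Here $\zeta+q/\zeta=\zeta+q\bar\zeta$, so $P(\zeta)=\zeta^4 h(\zeta+q\bar\zeta)$, and the condition reads $|h_A(\zeta+q\bar\zeta)|^2=|h_B(\zeta+q\bar\zeta)|^2$.
\end{itemize}
The first two conditions are linear. Writing $h_B=h_A+d$ with $\deg d\leq 3$, they force $d(T)=(T^2-(1+q)^2)(uT+w)$. The cubic condition is then a single quadratic equation in $(u,w)$ with coefficients depending on $h_A$.

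To find a solution, I would make a polynomial ansatz: take $h_A$ with small integer-polynomial coefficients in $q$ (e.g. $h_A=T^4-4qT^2+\ldots$, modeled on the $g=3$ example $T^3-3qT-1$), and $u,w$ of low degree in $q$. I would solve the resulting identity in $\Z[q]$, guided by a computer search over small $q$ (say $q\leq 50$) for colliding triples $(c_1,c_2,c_3)$, and interpolate the pattern. The Weil-region condition is verified as in the $g=3$ example: exhibit sample points in $[-2\sqrt q,2\sqrt q]$ where $h_A$ and $h_B$ alternate in sign (five points giving four sign changes), valid for all $q$ beyond some bound. Finally, check that $P_A\neq P_B$, and that both are ordinary (or otherwise realized by abelian varieties via Honda--Tate) for odd $q$. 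For the latter, it suffices to check that the middle coefficient is prime to $p$, so that they are genuine isogeny classes. Then $N(4)\geq 4$, and with \cref{thm:main} we get $N(4)=4$.

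The main obstacle is finding the family: the quadratic norm equation must have a solution that is polynomial in $q$ and keeps $h_B$ inside the Weil region. Since $d$ vanishes at $\pm(1+q)$, which lie far outside $[-2\sqrt q,2\sqrt q]$, the perturbation $d$ is of size about $q^2|uT+w|$ on the Weil interval. So $u$ and $w$ must be tiny relative to the coefficients of $h_A$, which is why $h_A$ must be tuned.

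If no polynomial family appears, the fallback is a density argument. Over a large enough $q$, the three conditions cut the roughly $q^{5/2}$ Weil points in $\Z^4$, and the fibres of the two linear maps are lines with about $q^{1/2}$ points. I would then show by pigeonhole that the cubic norm, which takes relatively few values along such a line, must collide. This is less clean, so I would try the explicit construction first.
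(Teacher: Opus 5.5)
Your reduction is correct and follows the paper's own strategy. You seek an explicit $\Z[q]$-family in trace coordinates. The two linear conditions force $h_B-h_A=(T^2-(1+q)^2)(uT+w)$, the third count becomes $|h(\tau)|^2$ with $\tau=\omega+q\bar\omega$, and the Weil condition is checked by sign alternation. The paper's family has exactly this shape, with $u=0$ and $w=-2$.

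\textbf{The gap.} You never exhibit the family, and for this proposition the family \emph{is} the proof. ``Search over small $q$ and interpolate'' does not show that the norm equation has any solution in $\Z[q]$ keeping both quartics in the Weil region.

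Your fallback cannot replace the construction. First, the counts are off: the Weil region contains $\asymp q^5$ lattice points of $\Z^4$, and the fibres of two linear conditions are planes, not lines. More seriously, the fibres are far too small for pigeonhole. Write $h=T^4-b_1T^3+b_2T^2-b_3T+b_4$. Along a fibre, $\Delta b_3=u(1+q)^2$ and $|b_3|\le 32q^{3/2}$, which forces $u=0$ once $q\ge 4096$. Similarly $\Delta b_4=-w(1+q)^2$ and $|b_4|\le 16q^2$ force $|w|\le 31$. So each fibre meets the Weil region in a bounded number of points on a line. Along that line $F_3$ is a quadratic polynomial in $w$, and two points collide if and only if
\[
  2\operatorname{Re}\bigl(\overline{h_A(\tau)}\,D(\tau)\bigr)+w\,|D(\tau)|^2=0,\qquad D(T)=T^2-(1+q)^2 .
\]
This is an exact arithmetic condition, and no counting argument over $O(1)$ points produces it.

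\textbf{How to close it.} For fixed $w$, the condition above is a single \emph{linear} equation in the coefficients of $h_A$. Reducing with $\tau^2=-(1+q)\tau-(q^2-q+1)$ turns the search into linear algebra over $\Z[q]$. The paper's solution is $w=-2$ and $h_A=T^4-4qT^2-(q+1)T+(q^2-q-2)$. Then:
\begin{itemize}
  \item $h_A(\tau)=q^2(q+1)\tau+q^3-2$ and $D(\tau)=-(q+1)\tau-(2q^2+q+2)$;
  \item $\operatorname{Re}\bigl(\overline{h_A(\tau)}D(\tau)\bigr)=|D(\tau)|^2=3(q^4+q^2+1)$, so $|h_A(\tau)|^2=|h_B(\tau)|^2=q^8-q^6+3q^4+2q^2+4$;
  \item the sign patterns at $-2\sqrt q,-\sqrt q,0,\sqrt q,2\sqrt q$ for $h_A$, and at $-2\sqrt q,-\sqrt{2q},0,\sqrt q,2\sqrt q$ for $h_B$, give four roots in $[-2\sqrt q,2\sqrt q]$ for $q\ge 8$.
\end{itemize}

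\textbf{Realizability.} Your ordinarity shortcut would fail on this family. The middle coefficient of $P$ is $\equiv b_4 \pmod q$, and $P_B$ has middle coefficient $q^2-q$. You need Honda--Tate via the Newton polygon instead. The slopes are $0^3,(\tfrac12)^2,1^3$. For $q=p$ an odd prime, the two slope-$\tfrac12$ roots form a single ramified place with invariant $1$, so all local invariants are integral and $P_B$ is realized. The paper's proof does not spell this point out.
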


\begin{proof}[Proof of \cref{prop:N4}]
By \cref{thm:main} it suffices to show $N(4) > 3$, i.e.\ that three point
counts do not suffice. Consider, for each $q$, the two abelian fourfolds whose
reverse characteristic polynomials of Frobenius are
\begin{align*}
    P_A(T) &= T^8 - (q+1)T^5 - (q^2+q+2)T^4 - q(q+1)T^3 + q^4,\\
    P_B(T) &= T^8 - 2T^6 - (q+1)T^5 + (q^2-q)T^4 - q(q+1)T^3 - 2q^2T^2 + q^4.
\end{align*}
(These polynomials were found for this paper by the lattice mechanism described after this
proof: a collision must live in the even-coefficient sublattice that the linear conditions
$f_A(\pm1)$ and the cyclotomic norm $F_3$ leave undetermined; they were then verified
symbolically, the search and verification scripts being \texttt{optimality\_search.py},
\texttt{weil\_general.py}, and \texttt{verify\_g4\_family.sage} in \cite{Repo}. The same
mechanism produced the $g=2,3$ families above.)
Both are $q$-Weil polynomials for every $q \geq 8$. Matching coefficients in the
trace-polynomial factorisation
$P(T) = \sum_{k=0}^{4}(-1)^k e_k(\beta)\,T^k(T^2+q)^{4-k}$ (notation as in the setup
above) gives the trace polynomials
\begin{align*}
    h_A(T) &= T^4 - 4q\,T^2 - (q+1)T + (q^2-q-2), \\
    h_B(T) &= T^4 - (4q+2)T^2 - (q+1)T + 3q(q+1).
\end{align*}
A computer algebra system confirms that for every $q \geq 8$ both have four real
roots in $[-2\sqrt q, 2\sqrt q]$: the values of $h_A$ at
$-2\sqrt q, -\sqrt q, 0, \sqrt q, 2\sqrt q$ alternate in sign as $+,-,+,-,+$, and
likewise those of $h_B$ at $-2\sqrt q, -\sqrt{2q}, 0, \sqrt q, 2\sqrt q$, placing
one root in each subinterval. The threshold $q \geq 8$ is sharp and set by the
right endpoint of $h_A$:
\[
    h_A(2\sqrt q) = (q+1)\bigl((\sqrt q - 1)^2 - 3\bigr) \geq 0
    \iff q \geq (1+\sqrt 3)^2 = 4 + 2\sqrt 3,
\]
that is, $q \geq 8$; for $q \leq 7$ the largest root of $h_A$ exceeds $2\sqrt q$.
The two polynomials are unequal (their $T^6$-coefficients differ), hence define
distinct isogeny classes. A direct
computation of the cyclotomic norms $F_d := \prod_{\operatorname{ord}\zeta = d} f(\zeta)$,
through which $c_1,c_2,c_3$ factor by \eqref{eq:cn-as-eval} and M\"obius
inversion, gives \emph{as identities in $q$}
\[
    F_1(A)=F_1(B)=q^4-2q^2-3q-2,\quad F_2(A)=F_2(B)=q^4+q,\quad F_3(A)-F_3(B)=0,
\]
so that $A$ and $B$ have the \emph{same} $\#A(\F_q),\#A(\F_{q^2}),\#A(\F_{q^3})$
for all $q$, while
\[
    F_4(A)-F_4(B) = -8q^6 - 16q^4 + 8q^3 - 16q^2 + 8q - 8 \neq 0
\]
distinguishes them at the fourth count. Thus three point counts never suffice for
$g=4$, and $N(4)=4$. The trace polynomials, the root locations for $q \geq 8$, and
the norm identities $F_1=F_2=F_3$, $F_4(A)\neq F_4(B)$ are all verified symbolically
in the accompanying \textsc{Sage} script \texttt{verify\_g4\_family.sage} of \cite{Repo}.
\end{proof}

The colliding fourfolds of \cref{prop:N4} differ only in the \emph{even}-degree
coefficients $a_2,a_4$: the obstruction is that the single nonlinear condition
$F_3 = |f_A(\omega)|^2$ ($\omega$ a primitive cube root of unity) fails to separate the
one-parameter family of even coefficients $(a_2,a_4)$ that the two linear conditions
$f_A(\pm 1)$ leave undetermined. For $g=3$ there is only the single even coefficient
$a_2$, already pinned down by $f_A(\pm1)$, and the residual fibre in the odd coefficient
$a_3$ is a line whose consecutive integer points are spaced $\sim q^2$ apart, far more
than the Weil region's $a_3$-extent $\asymp q^{3/2}$, so it holds at most one Weil
polynomial; this is exactly why $g=3$ escapes.
The same even/odd analysis indicates
that the phenomenon does not recur in higher even dimension: already at $g=6$ the two
nonlinear conditions over-determine the analogous direction lattice
(\cref{ex:g56}). Thus, among the explicit $\Z[q]$-families we have examined, $g=4$ is
the largest dimension in which $g$ point counts are optimal.

Combined with $N(4)=4$ from \cref{prop:N4} and the monotonicity recorded above, we know that
$N(g)\geq 4$ for all $g\geq 4$.
In particular $N(5) \in \{4,5\}$. The obvious strategy to prove $N(5)=5$, by mimicing the above process, i.e., by constructing two distinct families uniform in $q$ and having matching first four point counts, does not work. Using Groebner basis computations, we \emph{rule out} the existence of such colliding families.

\begin{example}[{$g = 5$ and $g = 6$: no $\Z[q]$-family shares four counts}]\label[example]{ex:g56}
Fix $g \in \{5,6\}$ and, as in \cref{prop:N4}, let $\delta = f_B - f_A$ be a difference of
two $\Z[q]$-families preserving the two linear conditions $f_A(\pm1)$. The Weil-admissible
such directions, those with $\deg_q\delta_k \leq \lfloor k/2\rfloor$, forced by
$|a_k| \leq \binom{2g}{k}q^{k/2}$, form an explicit lattice, of dimension $2$ for $g=5$
and $4$ for $g=6$. Imposing in addition that the first two nonlinear cyclotomic norms
\[
    F_3 = |f_A(\omega)|^2 \ \ (\omega^3 = 1), \qquad F_4 = |f_A(i)|^2,
\]
be preserved as identities in $q$ forces $\delta = 0$, by a Gr\"obner-basis elimination over a
general $\Z[q]$ base. Hence \emph{no} pair of non-isogenous $\Z[q]$-families of abelian varieties of dimension
$5$ or $6$ shares $c_1,\dots,c_4$.
The computation is carried out in
\texttt{optimality\_g5.py} and \texttt{optimality\_g6.py} in \cite{Repo};
We emphasize that this statement is only about $\Z[q]$-families; it does not exclude collisions at individual $q$.
\end{example}

\cref{ex:g56} and computational evidence for $q \leq 5$ lead us to ask:

\begin{question}\label[question]{ques:N56}
Is $N(g) < g$ for $g=5,6$? If this is true for $g=6$, is $N(6) = 4$?
\end{question}

\begin{question}\label[question]{ques:Ng}
More generally, what is the asymptotic behaviour of $N(g)$? Does the limit
$\lim\limits_{g \to \infty} N(g)/g$ exist, so that $N(g) \sim \alpha g$ for some constant
$\alpha$, and if so, what is $\alpha$? If the limit does not exist, what are
$\liminf\limits_{g \to \infty} N(g)/g$ and $\limsup\limits_{g \to \infty} N(g)/g$?
\end{question}

\begin{remark}[$g = 8$]\label[remark]{rem:g8}
The over-determination in \cref{ex:g56} is peculiar to small $g$. At $g=8$ we work with
the $8$ unknowns $\delta_1,\dots,\delta_8$; the two linear conditions $f_A(\pm1)$, that is
$F_1$ and $F_2$, cut these down to a $6$-dimensional space of directions, while the two
nonlinear norms $F_3,F_4$ supply only a bounded number of further constraints. This gap
between the $g-2$ available directions and the bounded supply of nonlinear constraints
widens with $g$, so for large $g$ the norms $F_3,F_4$ can no longer force $\delta=0$.
Indeed at $g=8$ the lattice is no longer forced
to $0$ by $F_3,F_4$: an explicit nonzero direction preserving $F_1,\dots,F_4$ as identities
in $q$ exists, namely the one with
\[
    \delta_5 = -(q+1), \qquad \delta_7 = q^3 + 1, \qquad \delta_k = 0 \ \ (k \neq 5,7),
\]
for which $F_3$ and $F_4$ are preserved over a suitable $\Z[q]$ base. So the mechanism above
no longer pins $N(8)$ to $4$. Whether this direction (or another) is realised by genuine
$q$-Weil polynomials, which would give $N(8) \geq 5$,
remains open. The direction is located numerically in \texttt{optimality\_find.py}
and confirmed as an exact $q$-identity in \texttt{verify\_g8\_direction.py} of \cite{Repo}.
\end{remark}

\begin{remark}[On the bit-count heuristic]\label[remark]{rem:bitcount}
The slogan ``$g$ unknowns, so $g$ measurements'' is not a proof, and indeed it
fails: \cref{prop:N3} shows $g-1$ measurements can suffice. Nor does
the opposite, information-theoretic heuristic settle the question: since each
$c_i$ is a large integer (about $gi\log q$ bits), a naive bit-count would suggest
that $N \sim \sqrt g$ point counts already encode the
$\sim \tfrac{g^2}{4}\log q$ bits of $(a_1,\dots,a_g)$. What governs the truth is
the arithmetic of the fibres in \eqref{eq:cn-as-eval}: extra precision in a single
$c_i$ does not substitute for a missing evaluation, but evaluations at $\pm 1$
(the counts $c_1,c_2$) are linear and can over-determine the coefficients through
integrality, as in \cref{prop:N3}.
\end{remark}

\begin{remark}[On the constant $Q(g)$]\label[remark]{rem:constant}
We have made no attempt to optimise $Q(g) = D^{2g+2}$, and it is worth isolating
what governs its two ingredients. The \emph{base} $D = 16g^3 p(2g)$ is the
contraction constant of \cref{lem:induction-step}: the induction closes only
because each error $\Delta s_j$ stays within its budget $B_j$, and the proof
secures this by taking $D$ large. The \emph{exponent} $2g+2$ counts how many times
that constant must be paid; it is forced by the base case \cref{lem:base}, which
requires $q \geq D^{2g}$ (the remaining $+2$ being slack for the final rounding
and the small range). Thus $Q(g)$ is (contraction constant) raised to the (number
of inductive amplifications).

Since $p(2g) = e^{\Theta(\sqrt g)}$ by Hardy--Ramanujan, our bound
satisfies
\[
    \log Q(g) = (2g+2)\log\bigl(16 g^3 p(2g)\bigr) = \Theta(g^{3/2}),
\]
and the $\Theta(g^{3/2})$ rate comes \emph{entirely} from the factor
$p(2g)^{2g}$; the polynomial part $16g^3$ contributes only $\Theta(g\log g)$, which
is the size of $\log Q$ that would survive its removal.

\textbf{Removing $p(2g)$.} The factor $p(2g)$ is an artefact of one crude step. The error
bound \eqref{eq:ind-geom} for $\Delta s_{2i-1}$ rests on the sum
\[
    S \defeq \sum_{k_0=i}^{2i-2} D^{\,k_0-g}(2g)^{2i-1-k_0},
\]
each term of which arose from a partition with one large index $k_0$ and smaller parts
filling the deficit $N \defeq 2i-1-k_0$; there are $p(N)$ such partitions, which in
\cref{lem:induction-step} we bounded crudely by $p(2g)$ before extracting it from
\eqref{eq:ind-bracket}. Keeping $p(N)$ inside instead and reindexing $S$ by the deficit
$N$ (so $k_0 = 2i-1-N$, $1 \leq N \leq i-1$),
\[
    \sum_{N=1}^{i-1} p(N)\, D^{\,2i-1-N-g}(2g)^{N}
    = D^{\,2i-1-g}\sum_{N=1}^{i-1} p(N)\,x^{N}, \qquad x \defeq \frac{2g}{D}.
\]
The sum on the right is a truncation of the partition generating function
\[\sum_{N\geq0} p(N)\,x^N = \prod_{k\geq1}(1-x^k)^{-1}.\]
The ratio of consecutive terms,
$\tfrac{p(N+1)}{p(N)}\,x$, tends to $x$, so for $x \leq \tfrac12$ the series converges
geometrically and is dominated by its first term $p(1)\,x = x$:
\[
    \sum_{N\geq1} p(N)\,x^{N} = x\,(1+O(x)) = O(g/D).
\]
This single estimate does two things at once: it replaces the crude factor $p(2g)$ by
$O(g/D)$, and, because the series collapses to its first term, it spares the factor
$(i-1) \asymp g$ that the crude ``(number of terms) $\times$ (largest term)'' bound on $S$
would cost. Carrying the result through \eqref{eq:ind-geom},
\[
    \Delta s_{2i-1}
    \leq 4g^2\,q^{-(2i-1)/2-g/4}\,D^{\,2i-1-g}\cdot O(g/D)
    = O\!\Bigl(\tfrac{g^2}{D}\Bigr)\,B_{2i-1},
\]
so the contraction of \cref{lem:induction-step} closes already with $D = O(g^2)$ rather
than $16 g^3 p(2g)$; the assumption $x = 2g/D \leq \tfrac12$ is then self-consistent, since
$D = O(g^2)$ gives $x = O(g^{-1})$. The result is
\[
    Q(g) = D^{2g+2} = g^{O(g)} = e^{O(g\log g)},
\]
which removes the super-exponential factor.

\textbf{The exponent.} What remains, the exponent $\approx 2g$, is structural to our method.
Fitting the seed estimate \eqref{eq:seed}
inside $B_j$ at the smallest index is the binding case of \cref{lem:base}: with $t = 2j-g$
there, the requirement $\tfrac{t}{4}\log_D q \geq \log_D 4 + \tfrac{g-t}{2}$ is tightest at
the smallest $t$, where it reads $\log_D q \gtrsim 2g/t$. For odd $g$ the smallest
large-range index is $j = (g+1)/2$, i.e.\ $t = 1$, which forces $q \gtrsim D^{2g}$; for even
$g$ it is $j = g/2+1$, i.e.\ $t = 2$, which forces only the weaker $q \gtrsim D^{g}$.

The final rounding and the small range, by contrast, need only $q > (2g)^{4/g}D^4$ and
$q > 256\,g^2$. Even with $D$ polynomial one still has $Q(g) = g^{\Theta(g)}$, so making
$Q(g)$ \emph{polynomial} in $g$ would require taming this $D^{\Theta(g)}$ amplification by a
substantially different recovery scheme, which we leave as an open question.
\end{remark}

\subsection*{Formalization in Lean}
The recovery argument of Sections~\ref{sec:counts}--\ref{sec:proof} has been
formalized in the Lean~4 proof assistant on top of its mathematical library
\texttt{Mathlib}; the sources accompany this article in the file
\texttt{PointCountZeta/Determination.lean}. The inputs from the arithmetic of
abelian varieties lie outside \texttt{Mathlib} and are taken as explicit
hypotheses, bundled in a structure \texttt{FrobeniusData} that records the inverse
Frobenius eigenvalues $\beta_i = \alpha_i^{-1}$, $1 \leq i \leq 2g$, together with
\begin{itemize}
  \item the Riemann hypothesis $|\beta_i| = q^{-1/2}$ (\cref{sec:prelim});
  \item the integrality $q^n s_n \in \Z$ (\cref{lem:integrality});
  \item the Poincar\'e duality pairing $\beta_{\sigma(i)} = (q\,\beta_i)^{-1}$ for
    an involution $\sigma$ (\cref{lem:functional-eq});
  \item the determinant normalization $\prod_{i} \beta_i = q^{-g}$, which fixes the
    sign that the pairing alone leaves ambiguous;
  \item the closure of the $\beta_i$ under complex conjugation (the characteristic
    polynomial of Frobenius has rational coefficients).
\end{itemize}
Tate's theorem enters only interpretively, in reading ``$A$ and $A'$ have the same
$f_A$'' as ``$A$ and $A'$ are isogenous''.

Granting these inputs, the formalization establishes the conclusion of
\cref{thm:main} for $g \geq 2$ in the form: two such data sharing $L_1, \dots, L_g$, equivalently, sharing the point counts $c_1, \dots, c_g$, share all of
$e_0, \dots, e_{2g}$, and hence have the same $f_A$. It is complete except for two
\texttt{sorry}s, both standard classical identities that are simply absent from
\texttt{Mathlib}:
\begin{enumerate}
  \item the infinite M\"obius inversion over multiples \eqref{eq:mobius}, dual to
    \eqref{eq:cn_to_sn} (used in \cref{lem:small-i}); \texttt{Mathlib} provides only
    the finite divisor-sum form of M\"obius inversion;
  \item the explicit Newton--Girard / Waring formula \eqref{eq:sm-formula} of
    \cref{lem:formula}, which writes $s_m$ as a sum over partitions of products of
    the lower power sums; \texttt{Mathlib} provides only the recursive Newton
    identities.
\end{enumerate}
Every other step is proved in full. In particular the whole of
\cref{lem:induction-step}, the technical heart of \cref{sec:large}, including the
partition-sum error estimate, the bound by $p(2g)$ on the number of contributing
partitions, and the constant bookkeeping that closes precisely because $g \geq 2$, is formalized without gaps.

\bibliographystyle{amsalpha}
{\footnotesize
	\bibliography{references}
}

\end{document}